\newtheorem{theorem}{Theorem}[section]
\newtheorem{lemma}[theorem]{Lemma}
\newtheorem{corollary}[theorem]{Corollary}
\newtheorem{conjecture}[theorem]{Conjecture}
\newtheorem{observation}[theorem]{Observation}
\theoremstyle{definition}
\newtheorem{claim}{\indent Claim}[theorem]      
\newtheorem{case}{\indent Case}[section]
\begin{document}
	\title{\bf Bound vertices of longest paths between two vertices in cubic graphs}
	\author{\bf Chengli Li\footnote{Email: lichengli0130@126.com.}}
	\author{\bf Feng Liu\footnote{Email: liufeng0609@126.com (corresponding author).}}
	
	\affil{ Department of Mathematics,
		East China Normal University, Shanghai, 200241, China}
	\date{}
	\maketitle
\begin{abstract}
Thomassen's chord conjecture from 1976 states that every longest cycle in a $3$-connected graph has a chord. This is one of the most important unsolved problems in graph theory.   Let $H$ be a subgraph of a graph $G$. A vertex $v$ of $H$  is said to be $H$-bound if all the neighbors of $v$ in $G$ lie in $H$. Recently, Zhan has made the more general conjecture that in a $k$-connected graph, every longest path $P$ between two vertices contains at least $k-1$ internal $P$-bound vertices. In this paper, we prove that  Zhan's conjecture holds for $2$-connected cubic graphs. This conclusion generalizes a result of Thomassen [{\em J. Combin. Theory Ser. B} \textbf{129} (2018) 148--157]. Furthermore,  we prove that if the two vertices are adjacent, Zhan's conjecture holds for $3$-connected cubic graphs, from which we deduce that  every longest cycle in a $3$-connected cubic graph has at least two chords. This strengthens a  result of Thomassen [{\em J. Combin. Theory Ser. B} \textbf{71} (1997) 211--214].

\smallskip
\noindent{\bf Keywords:} Longest paths; Chords; Connectivity
			
\smallskip
\noindent{\bf AMS Subject Classification:}  05C38, 05C35, 05C40
\end{abstract}
	
\section{Introduction}

 All graphs considered in this paper are finite and simple. The order of a graph is its number of vertices, and the size is its number of edges. We denote by $V(G)$ and $E(G)$ the vertex set and edge set of a graph $G,$ respectively. For a subset $S$ of $V(G)$, we use $G[S]$ to denote the subgraph of $G$ induced by $S.$

 Let $S$ and $T$ be two subsets of $V(G)$, and let $x$  and $y$ be two vertices of $G$.  We use $N_S(x)$ to
denote the neighbors of $x$ in $S$, and define $N_S(T)=\cup_{x\in T}N_S(x)$. Denote by $[S,T]$ the set of edges having one
 endpoint in $S$ and the other in $T$.  Let $d_G(u)$  denote the degree of $u$ in $G$.
Furthermore, we denote the number of components in a graph $G$ by  $c(G)$. Let $G_1$ and $G_2$ be two vertex disjoint subgraphs of $G$. The  union $G_1\cup G_2$ denote the subgraph induced by $V(G_1)\cup V(G_2)$.  
Let $P$ be a path. We use  $P^*$ to denote  the set of internal vertices of $P$, and use $P[u,v]$ to denote the segment of $P$ between two  vertices $u$  and $v$. The symbol $[k]$ used in this article represents the set $\{1,2,\ldots,k\}$.

A Hamilton path in $G$ is a path containing every vertex of $G$.  A Hamilton  cycle in $G$ is a cycle containing every vertex of $G$. The graph $G$ is hamiltonian if it contains a Hamilton cycle. 

Thomassen's famous chord conjecture from 1976 is as follows.
\begin{conjecture}[Thomassen \cite{Alspach1985,Thomassen1989}]\label{Conjecture-Thomassen}
    Every longest cycle in a $3$-connected graph has a chord.
\end{conjecture}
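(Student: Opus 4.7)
The plan is to proceed by contradiction. Let $G$ be a $3$-connected graph and let $C$ be a longest cycle in $G$; assume $C$ has no chord. Since every vertex of $C$ already uses two of its incident edges on $C$ and has degree at least $3$ in $G$, the absence of chords forces every vertex of $C$ to have a neighbor in $H := G - V(C)$; in particular $H$ is nonempty. The first step is therefore to reduce the problem to a careful study of the bridges of $C$, i.e.\ the connected components of $H$ together with the edges joining them to $C$.

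Next I would exploit $3$-connectivity to ensure that each bridge $B$ of $C$ has at least three feet $u_1,\dots,u_k$ on $C$ (otherwise the two feet would form a cut set separating $B$ from the rest, contradicting $\kappa(G)\ge 3$). The core of the argument is then a rerouting: for a suitably chosen pair of feet $u_i, u_j$, find an internally disjoint $u_i$--$u_j$ path $Q$ through $B$ and replace one of the two $u_i$--$u_j$ arcs of $C$ by $Q$. If $|Q|$ exceeds the length of the replaced arc, the resulting closed walk is a longer cycle than $C$, contradicting maximality. To make this work one would pick the pair $(u_i,u_j)$ to be consecutive feet along the shorter arc between them on $C$, and simultaneously choose $Q$ to traverse as many internal vertices of $B$ as possible, typically via a longest $u_i$--$u_j$ path in $B\cup\{u_i,u_j\}$.

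The hard part, and the reason the conjecture has resisted attack for nearly fifty years, is the lack of a usable lower bound on $|Q|$ compared with the arc it must beat. In general, a bridge with three feet spaced far apart on $C$ may supply only a very short detour through $H$, while the arc it must replace is long; chordlessness and $3$-connectivity alone do not force enough long paths through $H$. In the restricted setting of the paper under review, cubicity pins down bridge structure almost completely: each foot contributes only one edge into the bridge, so the bridges are essentially trees or small pieces, and one can carry out careful casework. For the full conjecture I would expect the main obstacle to be the absence of such rigidity, and any successful attack likely requires either an auxiliary invariant, perhaps the number of chordless longest cycles or a weighted count of bridge feet, that strictly increases under every rerouting, or a genuinely new framework (cycle-space, topological, or probabilistic) beyond the naive bridge-rerouting above.
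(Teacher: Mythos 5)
There is a genuine gap here, and it is one you have already named yourself: the statement you were asked to prove is Conjecture~\ref{Conjecture-Thomassen}, which the paper presents as an \emph{open} conjecture --- it has no proof in the paper, and your proposal does not supply one either. Your setup is sound as far as it goes: if a longest cycle $C$ in a $3$-connected graph had no chord, then every vertex of $C$ would have a neighbor off $C$, each bridge of $C$ would have at least three feet, and one would like to reroute $C$ through a bridge to obtain a longer cycle. But the argument stops exactly where it must not: you have no lower bound on the length of the detour $Q$ relative to the arc of $C$ it replaces, and no monotone invariant that guarantees termination of any iterative rerouting. Without one of these, the contradiction with the maximality of $C$ never materializes, and your own closing paragraph concedes this. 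A sketch that identifies the obstruction is not a proof that overcomes it.

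For comparison, the paper does not attempt the conjecture in this generality. Its contributions (Theorems~\ref{Theorem-cubic-2-connected} and~\ref{Theorem-cubic-3-connected}) concern cubic graphs, where the machinery is quite different from naive bridge rerouting: the authors contract or replace the components of $G-V(P)$ to build an auxiliary Hamiltonian graph, use the Fleischner--Stiebitz $3$-coloring of a Hamilton cycle plus disjoint triangles (Theorem~\ref{Lemma-color-triangle} and Lemma~\ref{Lemma-color-triangle-path}) to select an independent set $A$ of ``attachment'' vertices, and then run Thomason's lollipop parity argument (Lemmas~\ref{Lemma-edge-even-number} and~\ref{Lemma-another-Hamilton-cycle}) to produce a second Hamilton cycle whose structure forces a strictly longer cycle in $G$ --- the length gain is extracted from a counting identity (Claims~\ref{Claim-p-d-q} and~\ref{Claim-c=p=0}), not from comparing a single detour to a single arc. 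If you want to make progress even in a special case, that is the template to study; for the general conjecture, you would indeed need the kind of new invariant or framework you gesture at, but gesturing at it does not close the gap.
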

Although this beautiful conjecture currently remains unsolved,  there are many partial results for which Conjecture~\ref{Conjecture-Thomassen} has been proved. See \cite{Thomassen1997,Thomassen2018,Birmele2008,Kawarabayashi2007,Li2003-1,Li2003-2,Wu2014,Zhang1987} for some previous and recent progress on this conjecture. In particular,  Conjecture~\ref{Conjecture-Thomassen} has been proved for cubic graphs by Thomassen.
\begin{theorem}[Thomassen  \cite{Thomassen1997}]\label{Theorem-Thomassen=3-connected}
Every longest cycle in a $3$-connected cubic graph has a chord.
\end{theorem}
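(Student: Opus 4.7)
The plan is to argue by contradiction. Suppose $G$ is a $3$-connected cubic graph and $C = v_1 v_2 \cdots v_n v_1$ is a longest cycle of $G$ with no chord. Chord-freeness together with cubicity forces each $v_i$ to have a unique neighbor $u_i \in V(G) \setminus V(C)$, which I call its \emph{partner}; in particular $C$ cannot be Hamiltonian (otherwise $G = C$ would be $2$-regular), so $G - V(C) \neq \emptyset$. Fix a component $H$ of $G - V(C)$ and set $A = \{v_i : u_i \in V(H)\}$. Three-connectivity of $G$ then forces $|A| \geq 3$, for otherwise fewer than three attachment vertices would form a cut separating $H$ from $V(C) \setminus A$.

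The core tool would be a rerouting argument. Fix three attachment vertices $v_a, v_b, v_c \in A$ in cyclic order on $C$, cutting $C$ into three arcs of lengths $\ell_1, \ell_2, \ell_3$ summing to $n$. Since $H$ is connected, the partners $u_a, u_b, u_c$ admit a Steiner connector in $H$, either a path through all three or a Y-shape with central vertex $w \in V(H)$ and three internally disjoint arms $R_a, R_b, R_c$ of lengths $r_a, r_b, r_c$ (the path case being a degenerate Y-shape). Replacing one arc of $C$ by the detour through the corresponding two arms yields a new cycle of length $n - \ell_i + r_j + r_k + 2$, and the maximality of $|C|$ forces $\ell_i \geq r_j + r_k + 2$ for each of the three symmetric choices; summing gives $n \geq 2(r_a + r_b + r_c) + 6$, which is not yet a contradiction.

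The hard part is therefore to upgrade this inequality into the existence of a strictly longer cycle. My plan is to exploit additional structure in three complementary ways. First, when $|A| \geq 4$, I would look for two \emph{vertex-disjoint} two-arm detours replacing two disjoint arcs of $C$ simultaneously; since $H$ is subcubic, a single central vertex $w$ cannot absorb arms to four partners at once, so such a decomposition should be forced, and the total length of the two detours should be engineered to exceed that of the two replaced arcs. Second, when $|A| = 3$, I would apply the analysis to \emph{every} component of $G - V(C)$ and combine the local constraints with the global count that each $v_i$ contributes exactly one spoke to $V(G) \setminus V(C)$. Third, the small degenerate configurations $|V(H)| \in \{1,2\}$, in which a single off-cycle vertex has two or three neighbors on $C$, would be handled by direct case analysis using the cubicity of $G$. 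Making the two-disjoint-detour argument go through despite only having connectivity (rather than $2$-connectivity) inside $H$, and arranging it so the total excess is strictly positive, is where I expect the proof to be most delicate.
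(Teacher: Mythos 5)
Your setup is sound as far as it goes: chord-freeness plus cubicity does give each cycle vertex a unique off-cycle neighbour, $3$-connectivity does force every component $H$ of $G-V(C)$ to have at least three attachment vertices on $C$, and the three detour inequalities of the form $\ell_i \ge r_j+r_k+2$ are exactly what the maximality of $C$ yields. But summing them gives only $n \ge 2(r_a+r_b+r_c)+6$, and, as you yourself concede, nothing after that point is an argument. That is the genuine gap, and it is not merely technical: the direct rerouting strategy is precisely the approach that does not close for this theorem. For $|A|\ge 4$, a connected subcubic $H$ need not contain two vertex-disjoint connectors joining the right pairs of partners (a single long path in $H$ with the four partners interleaved along it already defeats the ``two disjoint Y-shapes'' picture), and even when disjoint detours do exist, the same maximality inequalities that blocked the single-detour case block the double one. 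For $|A|=3$, the per-component inequalities are simultaneously satisfiable for all components at once, so no global count over components yields a contradiction either.

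What actually closes the argument --- in Thomassen's 1997 paper, and in the present paper's proof of Theorem~\ref{Theorem-cubic-2-connected}, which runs the same machinery for the path version --- is a different mechanism entirely. One contracts each component of $G-V(C)$ onto a triangle spanned by three chosen attachment vertices, applies the Fleischner--Stiebitz cycle-plus-triangles theorem (Theorem~\ref{Lemma-color-triangle}, via Lemma~\ref{Lemma-color-triangle-path}) to $3$-colour the resulting auxiliary graph, selects an independent colour class $A$ meeting every triangle, and then runs Thomason's lollipop parity argument (Lemmas~\ref{Lemma-edge-even-number} and~\ref{Lemma-another-Hamilton-cycle}) to produce a \emph{second} Hamilton cycle of the auxiliary graph whose new edges all pass through $A$; lifting those edges back to paths through the components of $G-V(C)$ then strictly lengthens the cycle. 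The independence of $A$ guarantees that every substituted detour has length at least two, and the parity argument guarantees that the second cycle exists at all. Neither ingredient appears in your plan, and without something playing their role the inequality $n \ge 2(r_a+r_b+r_c)+6$ is where the proof stops. Note also that the paper does not reprove this statement; it cites it as Theorem~\ref{Theorem-Thomassen=3-connected} and then strengthens it, via Theorem~\ref{Theorem-cubic-3-connected}, to Corollary~\ref{Cororllary-cubic-3-connected-2-chord}.
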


Let $H$ be a subgraph of a graph $G$. A vertex $v$ of $H$  is said to be $H$-bound if all the neighbors of $v$ in $G$ lie in $H$; i.e., $N_G(v)\subseteq V(H)$. A vertex of a path $P$ is called an internal vertex if it is not an endpoint of $P$. For two distinct vertices $x$ and $y$, an $(x, y)$-path is a path whose endpoints are $x$ and $y$.  Recently, Zhan has proposed the following conjecture, which implies Conjecture~\ref{Conjecture-Thomassen}.

\begin{conjecture}[Zhan \cite{Zhan2024}]\label{Conjecture-Zhan}
Let $G$ be a $k$-connected graph with $k\geq 2$ and let $x,y$ be two distinct vertices of $G$. If $P$ is a longest $(x,y)$-path in $G$, then $P$ contains  $k-1$ internal $P$-bound vertices.
\end{conjecture}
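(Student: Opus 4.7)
I would attempt Zhan's conjecture by induction on $k$, reducing the problem to a single \emph{rerouting lemma} that exploits the maximality of $P$. Suppose for contradiction that $P = v_0 v_1 \cdots v_n$, with $v_0=x$ and $v_n=y$, is a longest $(x,y)$-path containing at most $k-2$ internal $P$-bound vertices. Let $\mathcal{C}$ be the set of components of $G-V(P)$ and, for each $C \in \mathcal{C}$, set $A(C) := N_P(C)$ (the \emph{attachment set} of the bridge $C$). A standard Menger argument yields $|A(C)| \geq k$ in all non-degenerate cases, and every vertex of $A(C)$ is either an endpoint of $P$ or a non-bound internal vertex; in particular, the contradiction hypothesis forces nearly all internal vertices of $P$ to serve as attachments of some bridge.

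For the base case $k=2$, I would use a minimum-interval rerouting: choose a bridge $C$ and two attachments $v_i, v_j \in A(C)$ with $j-i$ as small as possible; any $v_i$-$v_j$-path through $C$ has at least three vertices, while the bypassed segment $P[v_i, v_j]$ has length $j-i$. The assumption that every internal vertex of $P$ is non-bound should, together with an averaging or exchange argument along $P$, force a strictly longer $(x,y)$-path. For the inductive step, I would seek to apply the case $k-1$ to a suitable derived graph, such as $G-u$ for a carefully chosen vertex $u$ produced by the inductive hypothesis, or the graph obtained by contracting each bridge onto its attachment set, and then lift bound vertices from the derived graph back to $G$.

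The technical heart of both steps is a rerouting lemma whose prototype reads: if $C \in \mathcal{C}$ has consecutive attachments $v_{i_t}, v_{i_{t+1}}$ on $P$ and a non-bound vertex $v$ lies in the open segment between them, then $v$ has a neighbor in some other bridge $C'$, and a three-way splice through $C$, $C'$, and $P$ produces an $(x,y)$-path strictly longer than $P$. Summed over the $\geq k$ attachments of each bridge, such a lemma would give the required $k-1$ internal bound vertices.

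The main obstacle I anticipate is precisely this rerouting lemma, and it is where the conjecture becomes genuinely hard. The three-way splice need not recover the length lost by the bypass through $C$: the attachment sets of $C$ and $C'$ on $P$ can be deeply entangled, and several bridges may simultaneously cover overlapping stretches of $P$ in ways that defeat any single splice. This is the same obstruction that keeps Conjecture~\ref{Conjecture-Thomassen} open in general, and Zhan's conjecture is formally at least as strong. I therefore expect the plan to carry through cleanly only when bridges are structurally rigid (for example, when $|A(C)|\geq k$ forces very constrained internal structure on $C$, as happens in cubic graphs), while the full conjecture will likely require a new ingredient---plausibly an extremal minimum-counterexample reduction that forces every bridge to be trivial, or a sharper bridge-interaction theorem for longest paths in $k$-connected graphs.
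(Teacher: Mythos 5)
The statement you are addressing is stated in the paper as a \emph{conjecture} (Zhan's conjecture), and the paper does not prove it; it only establishes two special cases, namely Theorem~\ref{Theorem-cubic-2-connected} ($2$-connected cubic graphs) and Theorem~\ref{Theorem-cubic-3-connected} ($3$-connected cubic graphs with $xy\in E(G)$). Your submission is a proof plan rather than a proof, and by your own admission the central ``rerouting lemma'' is left unproved. That is not a minor technicality: it is exactly the open core of the problem. Concretely, in the base case $k=2$ your minimum-interval rerouting does not work as described. If $v_i,v_j$ are attachments of a bridge $C$ with $j-i$ minimal, a detour through $C$ replaces the segment $P[v_i,v_j]$ of length $j-i$ by a path of length at least $2$ (or at least $3$, via Observation~\ref{Observation-2-connected} in the cubic case), but $j-i$ can be arbitrarily large, so no length gain follows; and the assumption that every internal vertex is non-bound gives each such vertex a neighbor off $P$ or a chord, neither of which feeds an averaging argument in any obvious way. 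Even the $k=2$ case of the conjecture is, to date, known only for cubic graphs. The inductive step has a second gap: bound vertices do not lift. A vertex that is bound relative to a longest $(x,y)$-path in $G-u$ may be adjacent to $u$ in $G$, and a longest $(x,y)$-path in $G-u$ need not have any controlled relationship to a longest $(x,y)$-path in $G$; contracting bridges likewise destroys the longest-path property.

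It is also worth noting that the machinery the paper actually uses for the cubic cases is quite different from bridge rerouting. The proofs of Theorems~\ref{Theorem-cubic-2-connected} and~\ref{Theorem-cubic-3-connected} attach a triangle (or a path of order $3$) to the three attachment vertices of each bridge, invoke the Fleischner--Stiebitz cycle-plus-triangles theorem (Theorem~\ref{Lemma-color-triangle}) to extract an independent transversal $A$ of the attachment triples, and then run Thomason's lollipop parity argument (Lemmas~\ref{Lemma-edge-even-number} and~\ref{Lemma-another-Hamilton-cycle}) to produce a second Hamilton cycle in an auxiliary contracted graph; a careful edge-count comparing black, red, and blue edges then converts that second cycle into an $(x,y)$-path longer than $P$. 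Your instinct that cubic graphs are tractable because $|A(C)|\geq k$ forces rigid bridge structure is correct, but the leverage in the paper comes from the coloring-plus-parity combination, not from a local splice. As written, your plan does not constitute a proof of the conjecture, and the conjecture remains open.
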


In 2018, Thomassen \cite{Thomassen2018} proved that every longest cycle in a $2$-connected cubic graph has a chord. In this paper, we verify that Conjecture~\ref{Conjecture-Zhan} holds for $2$-connected cubic graphs. Our result shows that every longest path  between two vertices in a $2$-connected cubic graph has a chord, which generalizes Thomassen's result. We have used some ideas of Thomassen \cite{Thomassen2018} in our proofs, but we need to add new ones. Our first result is stated in the following theorem.

\begin{theorem}\label{Theorem-cubic-2-connected}
Let $G$ be a $2$-connected cubic graph and let $x,y$ be two distinct vertices of $G$. If  $P$ is a longest $(x,y)$-path in $G$, then $P$ contains at least one internal $P$-bound vertex.
\end{theorem}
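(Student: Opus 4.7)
My plan is to argue by contradiction. Suppose $P = v_0 v_1 \cdots v_n$ (with $x = v_0$ and $y = v_n$) is a longest $(x,y)$-path in $G$ in which no internal vertex is $P$-bound. Since $G$ is cubic, $v_{i-1}$ and $v_{i+1}$ already account for two of $v_i$'s three neighbors, and because $v_i$ is not $P$-bound, its third neighbor $v_i'$ lies off $P$ and is unique; consequently $P$ has no chord with an internal endpoint. If $V(P) = V(G)$, every internal vertex would be trivially $P$-bound, so $H := G - V(P) \neq \emptyset$, and a standard argument using the $2$-(edge-)connectivity of $G$ shows that every component $A$ of $H$ is attached to $V(P)$ at at least two distinct vertices.

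The workhorse is a simple detour principle. If $v_i'$ and $v_j'$ with $1 \le i < j \le n-1$ lie in the same component $A$ of $H$ and $Q$ is a $(v_i', v_j')$-path in $A$ of length $\ell$, then $v_0 \cdots v_i\, v_i'\, Q\, v_j'\, v_j \cdots v_n$ is an $(x,y)$-path of length $n + \ell + 2 - (j - i)$, so the maximality of $P$ forces $\ell \le j - i - 2$. In particular, the consecutive shadows $v_i', v_{i+1}'$ must lie in different components, and any two distinct shadows $v_i', v_j'$ with $|i-j| \le 2$ must lie in different components of $H$. Analogous detours that begin by leaving $P$ at $x$ or $y$ give further constraints; for example, no off-$P$ neighbor of $x$ can be adjacent to $v_1$, else inserting it between $x$ and $v_1$ yields the strictly longer path $x\, v_1'\, v_1\, v_2 \cdots v_n$.

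With these constraints in hand, I would follow the strategy of Thomassen~\cite{Thomassen2018} for longest cycles in $2$-connected cubic graphs, suitably adapted to paths. Namely, I would single out an \emph{extremal} component $A^*$ of $H$ --- for instance, one whose range of attachments $[a_1, a_k]$ on $P$ is not strictly contained in any other component's range, or is otherwise first in a natural ordering of $H$-components along $P$ --- and then use the cubic degree condition together with the $2$-connectivity of $G$ to exhibit a pair of attachment-neighbors $u_p, u_q \in A^*$ and a $(u_p, u_q)$-path in $A^*$ whose length exceeds $a_q - a_p - 2$, thereby violating the detour inequality and contradicting the maximality of $P$.

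The hard part, I expect, is handling the endpoints: unlike in the cycle setting, where every vertex on the cycle has a unique off-cycle neighbor, here $x$ and $y$ may each carry up to two off-$P$ edges, which breaks the symmetry available in Thomassen's cycle argument. A case analysis on whether $A^*$ attaches to $x$, to $y$, to both, or to neither --- together with careful bookkeeping of small (in particular singleton) components of $H$ and of the edges incident to $x$ and $y$ --- is presumably the place where the ``new ideas'' promised in the introduction must be supplied.
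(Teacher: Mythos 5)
Your setup is fine as far as it goes: the contradiction hypothesis, the observation that each internal vertex has a unique neighbor off $P$ (so $P$ has no chord with an internal endpoint), the nonemptiness of $H=G-V(P)$, and the detour inequality $\ell\le j-i-2$ are all correct and appear, in essentially this form, at the start of the paper's proof. But the proposal stops exactly where the actual proof begins. Everything from ``I would follow the strategy of Thomassen'' onward is a plan, not an argument, and the plan you describe --- pick an extremal component $A^*$ and exhibit a single attachment pair violating the detour inequality --- is not what Thomassen's 2018 proof does and is very unlikely to work. The detour inequalities are only local necessary conditions; a longest path can satisfy all of them simultaneously while still admitting no obvious local improvement, which is precisely why the known proofs are global. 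The paper's argument (i) uses the Fleischner--Stiebitz cycle-plus-triangles theorem to $3$-color an auxiliary graph built from $P$ and the attachment triples of the components, extracting an independent set $A$ meeting each $3$-attachment component; (ii) contracts components into a single graph $G_2$ with a Hamilton cycle $C$ (closing $P$ by the edge $xy$, which is also how it disposes of your ``hard part'' about the endpoints), and runs a lollipop/parity argument to produce a second cycle $C'$ through $xy$ covering all odd-degree vertices; (iii) performs a careful count of red/blue/black edges to force $C'$ to differ from $C$ only in length-two blue detours through vertices of $A$; and (iv) invokes Thomassen's matching theorem (Theorem~2.3 of the paper) on the resulting configuration to manufacture a strictly longer cycle through $xy$, giving the contradiction. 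None of these four steps is present or even gestured at in your proposal, so the core of the proof is missing.

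A smaller but real inaccuracy: you locate the novelty in ``handling the endpoints,'' but the paper neutralizes the endpoints almost immediately by adding the edge $xy$ and contracting any component adjacent to $x$ or $y$ into $x$ or $y$ (step (3) of the construction of $G_2$); the genuinely delicate work is the counting in Claims~3 and~4 that pins down the structure of the second cycle before the matching theorem can be applied.
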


Our second result shows that if the two vertices are adjacent, then Zhan's conjecture holds for $3$-connected cubic graphs. Specifically, we prove the following result.
 
\begin{theorem}\label{Theorem-cubic-3-connected}
Let $G$ be a $3$-connected cubic graph and let $x,y$ be two distinct vertices of $G$. If $x$ and $y$ are adjacent and $P$ is a longest $(x,y)$-path in $G$, then $P$ contains at least two internal $P$-bound vertices. 
\end{theorem}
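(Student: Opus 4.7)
The plan is to argue by contradiction: assume $P$ contains at most one internal $P$-bound vertex and construct an $(x,y)$-path strictly longer than $P$. Since $G$ is $3$-connected and cubic, it is in particular $2$-connected, so Theorem~\ref{Theorem-cubic-2-connected} applied to the pair $(x,y)$ already supplies at least one internal $P$-bound vertex on $P$. Combined with the contradiction hypothesis, $P$ has exactly one such vertex, which we denote by $z$.

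The first substantive step is to pin down the chord structure of $P$. Because $z$ is internal and $P$-bound, its unique non-path neighbor lies on $P$ and produces a chord $zz'$. If $z'$ were also internal, then $zz'$ would be the non-path edge of $z'$, making $z'$ a second internal $P$-bound vertex and contradicting uniqueness. Hence $z'\in\{x,y\}$; by the symmetry between $x$ and $y$ we may assume $z'=x$. A short degree count at $x$, $y$ and $z$, using that $xy$ is itself a chord of $P$ (as $x,y$ are adjacent and $P$ has length at least $2$), then forces the chord set of $P$ to be exactly $\{xy,xz\}$. In particular $x$ and $z$ have no neighbor in $Q:=V(G)\setminus V(P)$, whereas $y$ and every internal vertex of $P$ other than $z$ sends its third edge into $Q$, so $Q\neq\emptyset$.

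Next I would exploit $3$-connectivity through a bridge analysis. Let $K$ be any component of $G[Q]$ and set $A:=N_G(K)\cap V(P)$. Because $x,z\notin A$, the set $A$ separates $K$ from a non-empty remainder of $G$, and $3$-connectivity forces $|A|\geq 3$; moreover cubicity guarantees that each vertex of $A$ sends exactly one edge into $K$. Enumerate $A=\{v_{a_1},\dots,v_{a_s}\}$ with $a_1<\cdots<a_s$ and $s\geq 3$. The core of the argument is a detour--replacement step: for a suitable pair $i<j$, I would construct an internally-$K$ path $R$ from $v_{a_i}$ to $v_{a_j}$ of length strictly greater than $a_j-a_i$ and substitute $R$ for the segment $P[v_{a_i},v_{a_j}]$, producing an $(x,y)$-path strictly longer than $P$. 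The existence of such a triple $(i,j,R)$ should follow from an ear-style decomposition inside $K\cup A$ using $|A|\geq 3$ and cubicity, supplemented when necessary by simultaneous detours through two different bridges and by invoking the chord $xz$ to shortcut $P[x,z]$ while an additional detour is inserted on $P[z,y]$.

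The main obstacle will be the case in which every bridge $K$ is ``small'' relative to the $P$-distance between its attachments, so that no single-bridge replacement on its own beats $P$. In that regime one must combine detours through several bridges while using the chord $xz$ to create the needed slack, extending the techniques Thomassen employs in the proof of Theorem~\ref{Theorem-Thomassen=3-connected} and in the $2$-connected cubic case underlying Theorem~\ref{Theorem-cubic-2-connected}. Exceptional configurations --- such as when $A$ saturates nearly all of $V(P)$, or when $z$ lies very close to $x$ on $P$ --- will likely need a handful of dedicated sub-cases, and it is there that the new combinatorial ideas beyond Thomassen's toolkit will be required.
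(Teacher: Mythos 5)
Your opening reduction is correct and matches the paper's: by Theorem~\ref{Theorem-cubic-2-connected} there is at least one internal $P$-bound vertex, so under the contradiction hypothesis the cycle $C=P+xy$ has exactly one chord, and a degree count forces that chord to join an endpoint of $P$ to an internal vertex (the paper writes it as $e=xw$). Up to that point you and the paper agree.

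The gap is everything after that. Your ``detour--replacement step'' --- find attachments $v_{a_i},v_{a_j}$ of a bridge $K$ and an internally-$K$ path $R$ longer than $P[v_{a_i},v_{a_j}]$ --- is not an argument but a restatement of the goal: in the generic situation each bridge attaches at three widely separated vertices of $P$, any path through $K$ between two of them is far shorter than the $P$-segment it would replace, and no single-bridge (or naive multi-bridge) substitution gains length. You acknowledge this yourself and defer it to ``new combinatorial ideas,'' which is precisely where the proof lives. The paper's mechanism is global, not local: it deletes $x$ and $w$, closes up $C$ into a cycle $C'$ with new edges $ay$ and $bc$, uses the Fleischner--Stiebitz theorem (via Lemma~\ref{Lemma-color-triangle-path}) to select an \emph{independent} transversal $A$ of the attachment triples $\{u_i,v_i,w_i\}$ avoiding $\{a,y\}$, contracts each component $H_i$ into its $A$-vertex, and then applies the lollipop-based Lemma~\ref{Lemma-another-Hamilton-cycle} to produce a \emph{second} Hamilton cycle $C_1$ of the contracted graph containing $ay$ and $bc$ and having, at some vertex of $A$, one old and one new incident edge. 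Re-expanding the contracted vertices into paths through the components and reinstating $x$ and $w$ then yields, by the counting $|E(C_1^*)|\ge|E(C)|+q-p$ with $q>p$, a cycle through $xy$ strictly longer than $C$. Neither the independent-transversal selection, nor the second-Hamilton-cycle (parity/lollipop) step, nor the final length count has any counterpart in your sketch, and without some such global device the case analysis you propose cannot be completed. The setup is right; the proof is missing.
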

As an application of Theorem~\ref{Theorem-cubic-3-connected}, we have the following corollary.
\begin{corollary}\label{Cororllary-cubic-3-connected-2-chord}
 Every longest cycle in a $3$-connected cubic graph has at least two chords.
\end{corollary}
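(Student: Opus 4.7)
The plan is to reduce Corollary~\ref{Cororllary-cubic-3-connected-2-chord} to Theorem~\ref{Theorem-cubic-3-connected} by contradiction. Suppose $C$ is a longest cycle in the $3$-connected cubic graph $G$ that has at most one chord. By Theorem~\ref{Theorem-Thomassen=3-connected} it has at least one chord, hence exactly one, which I denote $uv$. I will choose a carefully positioned edge $xy \in E(C)$ and apply Theorem~\ref{Theorem-cubic-3-connected} to the $(x,y)$-path $P = C - xy$ to reach a contradiction.

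The specific choice is $x = u$ and $y$ one of the two $C$-neighbors of $u$; note $y \neq v$ because $uv$ is a chord and so $uv \notin E(C)$. Then $P = C - xy$ is an $(x,y)$-path with $V(P) = V(C)$ and $|E(P)| = |E(C)|-1$. Any $(x,y)$-path strictly longer than $P$, combined with the edge $xy \in E(G)$, would produce a cycle longer than $C$, contradicting the choice of $C$; hence $P$ is a longest $(x,y)$-path. Since $x$ and $y$ are adjacent and $G$ is $3$-connected cubic, Theorem~\ref{Theorem-cubic-3-connected} then guarantees at least two internal $P$-bound vertices.

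The crux is a clean characterization of internal $P$-bound vertices. If $w$ is an internal vertex of $P$, then $w \in V(C)\setminus\{x,y\}$ and its two $P$-neighbors coincide with its two $C$-neighbors, because removing the edge $xy$ alters only the neighborhoods of $x$ and $y$. As $G$ is cubic, $w$ has a unique third neighbor $w'$ in $G$, and $w$ is $P$-bound precisely when $w' \in V(C)$, i.e., when $w$ is incident to a chord of $C$. Under our assumption, the only chord is $uv$, so the vertices incident to chords of $C$ form the set $\{u,v\}$; since $u = x$ is not internal to $P$, the set of internal $P$-bound vertices is contained in $\{v\}$ and thus has size at most one. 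This contradicts the lower bound of two given by Theorem~\ref{Theorem-cubic-3-connected}, proving the corollary. I anticipate no substantial obstacle: the only point requiring a moment's care is ensuring $y \neq v$ when $x = u$, which is automatic because $uv \notin E(C)$, and the remainder is just the cubic-graph identification of interior $P$-boundedness with chord-incidence on $C$.
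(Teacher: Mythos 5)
Your proof is correct and follows essentially the same route as the paper: delete a cycle edge incident to the unique chord's endpoint, observe the resulting path is a longest path between adjacent vertices with at most one internal bound vertex, and contradict Theorem~\ref{Theorem-cubic-3-connected}. The extra care you take (verifying $y\neq v$ and identifying internal $P$-bound vertices with chord-incident vertices) is sound and merely makes explicit what the paper leaves implicit.
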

\begin{proof}[\bf Proof of Corollary~\ref{Cororllary-cubic-3-connected-2-chord}, assuming Theorem~\ref{Theorem-cubic-3-connected}]
For the sake of contradiction, suppose  $G$  contains a longest cycle $C$ that has at most one chord. By Theorem~\ref{Theorem-Thomassen=3-connected}, we have that $C$ has a chord, and hence $C$ has exactly one chord. For convenience, we assume that $C = v_1 v_2 \dots v_s v_1$ and that $e = v_1 v_t$ ($2 < t < s$) is a chord of $C$. However, it follows that $C - v_1 v_s$ is a longest $(v_1, v_s)$-path, and $v_t$ is the unique internal $(C - v_1 v_s)$-bound vertex, a contradiction. This proves Corollary~\ref{Cororllary-cubic-3-connected-2-chord}.
\end{proof}

We organize the remainder of this paper as follows: Section~\ref{Preliminaries} presents some useful lemmas  for subsequent proofs. Sections~\ref{Proof-2-connected} and~\ref{Proof-3-connected} are devoted to the proof of Theorems~\ref{Theorem-cubic-2-connected} and~\ref{Theorem-cubic-3-connected}, respectively.

\section{Preliminaries}\label{Preliminaries}
In this section, we present some notations and useful lemmas which will be used later in our proofs. A $k$-coloring of $G$ is a mapping $\varphi:V(G)\to[k]$ such that $\varphi(u)\neq\varphi(v)$ whenever $u$ and $v$ are adjacent in $G$.
The  chromatic number $\chi(G)$ of $G$
is the minimum integer $k$ such that $G$ admits a $k$-coloring.
Erd\H{o}s \cite{Erdos1990} popularized the intriguing cycle plus triangles problem, which asks whether every $4$-regular graph, formed by combining a Hamilton cycle with vertex-disjoint triangles, can be colored with only three colors. Fleischner and Stiebitz \cite{Fleischner1992} gave a positive solution to this problem. Furthermore, Fleischner and Stiebitz \cite{Fleischner1997} remarked that the conclusion of Theorem~\ref{Lemma-color-triangle} can be inferred from the results obtained in \cite{Fleischner1992}.
\begin{theorem}[Fleischner-Stiebitz  \cite{Fleischner1992}]\label{Lemma-color-triangle}
If a graph $G$ is the edge-disjoint union of a Hamilton cycle and some pairwise vertex-disjoint triangles, then $G$ is $3$-colorable. 
\end{theorem}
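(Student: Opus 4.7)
The plan is to apply Alon's Combinatorial Nullstellensatz to the graph polynomial of $G$,
\[
P(x_1,\ldots,x_{3n}) \;=\; \prod_{\substack{v_iv_j \in E(G)\\ i<j}}(x_j - x_i).
\]
Since $G$ is $4$-regular with $6n$ edges on $3n$ vertices, $P$ has total degree $6n$, and every variable appears with degree at most $4$. A proper $3$-coloring of $G$ with colors in $\{1,2,3\}$ corresponds exactly to an assignment of values at which $P$ does not vanish. By the Combinatorial Nullstellensatz it therefore suffices to show that the coefficient of the monomial $\prod_{i=1}^{3n} x_i^{2}$ in $P$ is nonzero; this would in fact yield the stronger conclusion that $G$ is $3$-choosable.

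By the Alon--Tarsi theorem, for any Eulerian orientation $D$ of $G$ this coefficient equals, up to sign, $|\mathcal{EE}(D)| - |\mathcal{OE}(D)|$, where $\mathcal{EE}(D)$ and $\mathcal{OE}(D)$ denote the collections of Eulerian sub-digraphs of $D$ with, respectively, an even and an odd number of edges. I would orient the Hamilton cycle $H = v_1v_2\cdots v_{3n}v_1$ cyclically and orient each triangle $T_i$ as a directed $3$-cycle; then $D$ is Eulerian with in-degree and out-degree both equal to $2$ at every vertex, and each Eulerian sub-digraph is determined by specifying, at every vertex, a balanced subset of the two incoming and two outgoing edges, the local choices being coupled along both $H$ and the triangles.

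The main obstacle is the resulting parity computation. My plan is to rewrite the signed count $|\mathcal{EE}(D)| - |\mathcal{OE}(D)|$ as a transfer-matrix product running around $H$, with a small matrix factor at each vertex that encodes the local choices among its two cycle edges and its two triangle edges. The contribution of each triangle $T_i$ should then collapse into a constant multiplier depending only on the relative positions of its three vertices along $H$, and the remaining cyclic product along $H$ should be nonzero thanks to the divisibility $3 \mid 3n$ together with the cyclic symmetry of the orientation. If this algebraic route becomes unwieldy, I would fall back on the original Fleischner--Stiebitz strategy: identify a small local substructure (such as two triangles joined by a short segment of $H$) whose removal produces a smaller cycle-plus-triangles graph to which induction applies, and show that any $3$-coloring of the reduced graph extends to the original.
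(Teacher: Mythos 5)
The paper never proves this statement: it is imported as a black-box citation of Fleischner and Stiebitz's solution to Erd\H{o}s's cycle-plus-triangles problem, so there is no internal proof to compare against. Your proposal correctly identifies the framework of the original proof --- the graph polynomial, the Combinatorial Nullstellensatz, and the Alon--Tarsi reduction to the signed count $|\mathcal{EE}(D)|-|\mathcal{OE}(D)|$ of Eulerian subdigraphs of the cyclically oriented graph --- and the setup (degree $6n$ polynomial, target monomial $\prod_i x_i^2$, which would even give $3$-choosability) is sound. But the proposal stops exactly where the theorem's entire difficulty lies: showing that this signed count is nonzero. That verification is the whole content of the Fleischner--Stiebitz paper, and you defer it to a ``main obstacle'' with a plan rather than an argument.

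The plan itself has a concrete defect. A transfer matrix running around $H$ with ``a small matrix factor at each vertex'' cannot encode the coupling imposed by the triangles: a triangle joins three vertices that may be arbitrarily far apart and arbitrarily interleaved with other triangles along $H$, so at any point of the circuit the state must record the pending in/out choices of every triangle that has been opened but not yet closed. The state space therefore grows with the number of mutually interleaved triangles, and the asserted collapse of each triangle ``into a constant multiplier depending only on the relative positions of its three vertices'' is unjustified --- the interleaving pattern is precisely what makes the count delicate, and the appeal to $3\mid 3n$ plus ``cyclic symmetry'' is not an argument. The actual proof pins down the signed count by a careful induction on the triangles, establishing a fixed nonzero residue; your fallback option gestures at such an induction without supplying either the reduction step or the extension argument. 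As written, the proposal is a correct identification of the right method, not a proof, and it does not substitute for the citation the paper relies on.
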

Through Theorem~\ref{Lemma-color-triangle}, we establish the following lemma which will be useful in our proofs.
\begin{lemma}\label{Lemma-color-triangle-path}
Let $G$ be a graph with a Hamilton cycle $C$.   If each nontrivial component of $G-E(C)$  is either a triangle or  a path of order $3$, then $G$ is $3$-colorable.
\end{lemma}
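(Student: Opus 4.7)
The plan is to reduce Lemma~\ref{Lemma-color-triangle-path} to the Fleischner--Stiebitz theorem (Theorem~\ref{Lemma-color-triangle}) by modifying $G$ into a graph $G'$ with a Hamilton cycle $C'$ whose non-cycle components are all vertex-disjoint triangles (together with possibly a few isolated vertices). For each $P_3$ component $P=uvw$ of $G-E(C)$, first observe that $uw\notin E(G)-E(C)$; otherwise $u,v,w$ would span a triangle in $G-E(C)$, contradicting that the component through $\{u,v,w\}$ is merely the path $uvw$. Hence either (a) $uw\notin E(G)$, or (b) $uw\in E(C)$.

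The construction of $G'$ treats the two cases separately. In case (a) I would add the missing edge $uw$ to $G$, which closes $\{u,v,w\}$ into a triangle edge-disjoint from the (unchanged) cycle $C$. In case (b) the edge $uw$ already lies on $C$, so I would subdivide it by inserting a brand new vertex $v'$: delete $uw$ from $C$ and add $uv'$ and $v'w$ as new cycle edges; the original edge $uw$ then survives in $G'$ as a chord, so $\{u,v,w\}$ again spans a triangle in $G'-E(C')$, while the auxiliary vertex $v'$ has degree $2$ and is isolated in $G'-E(C')$. Because the $P_3$ components of $G-E(C)$ are pairwise vertex-disjoint, the cycle edges $uw$ being subdivided in case (b) are pairwise distinct, so these local modifications do not interfere.

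After processing every $P_3$ component, I would verify that $C'$ is a Hamilton cycle of $G'$ and that every nontrivial component of $G'-E(C')$ is a triangle (the original triangle components of $G-E(C)$, together with the case-(a) and case-(b) completions), with all these triangles pairwise vertex-disjoint. Theorem~\ref{Lemma-color-triangle} then yields a proper $3$-coloring of $G'$. Since $V(G)\subseteq V(G')$ and $E(G)\subseteq E(G')$ by construction, restricting this coloring to $V(G)$ is a proper $3$-coloring of $G$.

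The main obstacle is case (b): the edge needed to complete the path $uvw$ into a triangle already sits on the Hamilton cycle, so a naive edge addition is blocked (or would create a multi-edge). The subdivision trick circumvents this by rerouting $C$ through an auxiliary degree-$2$ vertex $v'$, thereby demoting $uw$ from a cycle edge to a chord; the resulting isolated vertex in $G'-E(C')$ is harmless for applying Theorem~\ref{Lemma-color-triangle} and, since the extra vertex is discarded when restricting colors to $V(G)$, it has no effect on the final coloring.
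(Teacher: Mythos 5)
Your proposal is correct and follows essentially the same route as the paper: complete each $P_3$ component $uvw$ to a triangle by adding $uw$ when it is absent, and when $uw$ already lies on $C$, subdivide that cycle edge with a new degree-$2$ vertex so the added copy of $uw$ becomes a chord, then apply Theorem~\ref{Lemma-color-triangle} and restrict the $3$-coloring to $V(G)$. Your explicit observation that $uw\notin E(G)-E(C)$ (so only the two cases you list can occur) is a small point the paper leaves implicit, but the construction and conclusion are identical.
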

\begin{proof}[\bf Proof of Lemma~\ref{Lemma-color-triangle-path}.]
    If all non-trivial components of $G-E(C)$  are triangles, then the result follows from Theorem~\ref{Lemma-color-triangle}. Hence, we may assume $H_1,\ldots ,H_t$ to be  non-trivial components of $G-E(C)$ that are not triangles. Then $H_i$ is a path of order $3$. For convenience, we assume that $H_i=u_iv_iw_i$ for each $i\in [t]$. We construct a new graph $G'$ in the following manner: If $u_iw_i\notin E(C)$, then we add one new edge $u_iw_i$. If $u_iw_i\in E(C)$, then we subdivide the edge $u_iw_i$ in $G$ by one new vertex $z_i$ of degree $2$ and add one new edge $u_iw_i$. Note that the paths
   $u_iz_iw_i$ are used to extend $C$ to a Hamilton cycle in $G'$.
    Now, $G'$ is the edge-disjoint union of a Hamilton cycle and some pairwise vertex-disjoint triangles. By Theorem~\ref{Lemma-color-triangle}, we have that $\chi (G')\leq 3$. Note that $G$ is a  subgraph of $G'$. Hence, $\chi(G)\leq \chi (G')\leq 3$. This proves Lemma~\ref{Lemma-color-triangle-path}. 
\end{proof}
The following theorem was used by Thomassen to prove that every longest cycle in a $2$-connected cubic graph has a chord. This theorem will also be useful in our proofs.
\begin{theorem}[Thomassen \cite{Thomassen2018}]\label{Lemma-match}
Let $G$ be a  cubic graph with $V(G)$ has a partition into sets $A$, $B$ such that the induced graph $G[A]$ is a matching $M$, and $G[B]$ is a matching $M'$. Let $|A|=|B|=2k$. Assume that $G$ has a cycle $C$ of length $3k$ such that $C$ contains each edge in $M$, and precisely one end of each edge in $M'$. Then $G$ has a cycle of length greater than $3k$ containing $M$. 
\end{theorem}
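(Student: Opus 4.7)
The plan is to argue by contradiction: suppose no cycle of $G$ containing $M$ has length greater than $3k$, so that $C$ itself is extremal, and derive a contradiction by exhibiting a longer one.

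I would first extract the forced structure. Since $G$ is cubic on $4k$ vertices, $|E(G)|=6k$ and $|[A,B]|=6k-|M|-|M'|=4k$. On $C$, each $a\in A$ uses its $M$-edge together with exactly one further edge, which must lie in $[A,B]$, and each $b'\in B'$ on $C$ uses both of its $C$-edges to $A$ because its $M'$-partner is off $C$. A short double count shows $|[A,B']|=2k$ and $[A,B']\subseteq E(C)$, and hence each $a\in A$ has exactly one neighbor in $B'$ (lying on $C$) and exactly one in $B''$ (lying off $C$), while each $b''\in B''$ has exactly two $A$-neighbors.

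The main tool is augmentation along an $M'$-edge. For each $M'$-pair $(b',b'')$, let $\alpha,\beta$ be the two $A$-neighbors of $b'$ (so that $\alpha b'\beta$ is a subpath of $C$) and let $\gamma,\delta$ be the two $A$-neighbors of $b''$. If $\{\alpha,\beta\}\cap\{\gamma,\delta\}\neq\emptyset$, say $\gamma=\alpha$, then replacing the edge $\alpha b'$ of $C$ by the length-$2$ path $\alpha b''b'$ produces a cycle of length $3k+1$ still containing $M$, contradicting extremality. So I may assume the \emph{hard case}: for every $M'$-pair the two sets of $A$-neighbors are disjoint.

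In the hard case I would perform a chained augmentation. Write $\beta(a),\gamma(a)$ for the unique $B'$- and $B''$-neighbors of $a$, and consider the perfect matching $M_1=\{aa':\beta(a)=\beta(a')\}$ on $A$; the cycle $C$ arises from $M\cup M_1$ on $A$ by inflating each $M_1$-edge to a length-$2$ path through $B'$, and $M\cup M_1$ is a Hamilton cycle on $A$. I aim to produce a new perfect matching $N$ on $A$ such that $M\cup N$ is still a Hamilton cycle on $A$ and at least one edge $\{a,a'\}$ of $N$ admits a length-$3$ realization as the path $a\beta(a)\gamma(a')a'$ with $(\beta(a),\gamma(a'))\in M'$, while the remaining edges of $N$ are realized by length-$2$ paths through $B'$ on disjoint $B$-vertices. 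Each length-$3$ inflation injects one fresh vertex of $B''$ into the cycle, so the result is a cycle in $G$ of length at least $3k+1$ containing $M$.

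The main obstacle is constructing such an $N$ under the hard-case disjointness. I would obtain it from an alternating-cycle argument in the auxiliary bipartite graph $H$ on $B'\cup B''$ with edge set $\{\beta(a)\gamma(a):a\in A\}\cup M'$: in the hard case these two edge sets are disjoint, so $H$ is cubic bipartite and by K\"onig's theorem decomposes into three perfect matchings including $M'$. A suitable alternating cycle in $H$ between $M'$ and one of the other matchings prescribes the swap from $M_1$ to $N$; the final technical step is to verify that the inflated paths use disjoint $B$-vertices and that $M\cup N$ remains a single Hamilton cycle on $A$, both of which should fall out of the structure of this alternating cycle.
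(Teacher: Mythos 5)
A preliminary remark on provenance: the paper does not prove this statement at all---it is quoted as Theorem~\ref{Lemma-match} from Thomassen's 2018 paper---so the comparison below is with Thomassen's own argument, which is also the template for Lemmas~\ref{Lemma-edge-even-number} and~\ref{Lemma-another-Hamilton-cycle} of the present paper. Your structural analysis is correct and standard: with $B'=B\cap V(C)$ and $B''=B\setminus V(C)$, every $a\in A$ has exactly one neighbour in $B'$ (via a $C$-edge) and exactly one in $B''$, every $b''\in B''$ has exactly two neighbours in $A$, and your ``easy case'' augmentation (when some $b''$ shares an $A$-neighbour with its $M'$-partner) correctly produces a cycle of length $3k+1$ through $M$.

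The hard case, however, has a genuine gap. Any cycle through $M$ consists of the $k$ edges of $M$ joined by $k$ internally disjoint connectors of length $2$ or $3$ whose interiors lie in $B$, so you must exhibit a perfect matching $N$ on $A$ realized by such connectors, at least one of length $3$, with $M\cup N$ a \emph{single} $2k$-cycle; your proposal leaves both halves of this unestablished. First, the passage from an alternating cycle of $H$ to a connector system is not consistent as described: an alternating cycle $b_1',b_1'',b_2',b_2'',\dots$ whose non-$M'$ edges are $\beta(a_i)\gamma(a_i)$ naturally suggests the length-$3$ connectors $a_i\,\beta(a_i)\,b_i''\,a_{i+1}$ taken cyclically, which assigns each $a_i$ \emph{two} connectors instead of one; using only alternate connectors orphans the $M_1$-partners of the $a_i$ (whose unique $B'$-neighbour is now occupied), and resolving that cascade is precisely the unproved content. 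Second, and more fundamentally, nothing in the alternating-cycle structure controls whether $M\cup N$ stays connected or splits into several shorter cycles; this connectivity obstruction is the real difficulty of the lemma and is exactly why Thomassen does not argue by local exchange. His proof contracts each edge of $M'$ to a single vertex, notes that $C$ becomes a Hamilton cycle of the resulting graph whose chords form $k$ disjoint paths of length two centred at the contracted vertices, and applies a Thomason-style lollipop parity count (the content of Lemma~\ref{Lemma-another-Hamilton-cycle}) to obtain a second Hamilton cycle that uses, at some contracted vertex, one old edge and one chord; un-contracting that vertex reinserts the corresponding $M'$-edge and lengthens the cycle, with connectivity guaranteed automatically because the parity argument works with Hamilton paths throughout. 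Unless you can prove the connectivity of $M\cup N$ directly, you should expect to need such a parity mechanism, at which point the auxiliary graph $H$ and K\"onig decomposition become superfluous.
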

Thomason \cite{Thomason1978} introduced his elegant and powerful so called lollipop method.  We apply the lollipop method and follow the ideas from \cite[Lemma 2.1 and Theorem 2.2]{Thomassen1997} to establish the following results.
\begin{lemma}\label{Lemma-edge-even-number}
 Let $k\geq 2$ be an integer and let $G$ be a graph with a Hamilton cycle $C$. Suppose that for some vertex set $A$ of cardinality $k$, the subgraph $G-A$ has $k$ components $H_1,H_2,\ldots ,H_{k}$ each of which is a path, and the endpoints of $H_i$ are of odd degree in $G$ for $i\in [k-1]$.
Then
\begin{itemize}
    \item[$(1)$]for every Hamilton cycle $C'$ of $G$, $C'-A=C-A$, and
    \item[$(2)$]each edge of $G-E(H_{k})$ incident to an endpoint of $H_{k}$ is included in an even number of Hamilton cycles of $G$.
\end{itemize}
\end{lemma}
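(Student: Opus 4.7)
My plan is to prove (1) and (2) separately. For (1), a purely structural argument suffices and does not invoke the odd-degree hypothesis: removing the $k$ distinct vertices of $A$ from any Hamilton cycle $C'$ breaks it into at most $k$ arcs, each lying in a single component of $G-A$. Since each of the $k$ components $H_1,\ldots,H_k$ must contain at least one arc to cover its vertices, there are exactly $k$ arcs, one per component, and each such arc spans and hence equals $H_i$. This yields $C'-A = H_1\cup\cdots\cup H_k = C-A$.

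For (2), I would fix an endpoint $u$ of $H_k$ and an edge $e_0 = ua$ of $G-E(H_k)$ incident to $u$. Here $a\in A$ because the only $G$-edges out of $V(H_k)$ are to $A$: $V(H_k)$ has no neighbors in any $V(H_j)$ with $j\neq k$, and $u$'s only neighbor inside $V(H_k)$ is its $H_k$-neighbor $u'$. I will then apply Thomason's lollipop to the set $\mathcal{P}$ of Hamilton paths $P = u\,a\,v_3\cdots v_n$ beginning with $e_0$, forming the rotation graph $\Gamma$ satisfying $\deg_\Gamma(P) = d_G(x_P) - 1 - [x_P u \in E(G)]$ with $x_P := v_n$. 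Since each Hamilton cycle through $e_0$ corresponds bijectively to a $P \in \mathcal{P}$ with $x_P u \in E(G)$, handshaking in $\Gamma$ gives
\[
|\mathcal{H}(e_0)| \;\equiv\; \sum_{P \in \mathcal{P}} \bigl(d_G(x_P) - 1\bigr) \pmod{2},
\]
where $\mathcal{H}(e_0)$ denotes the set of Hamilton cycles through $e_0$.

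Next, applying the arc-counting argument of (1) to the path $P$ (with $u$ now constituting a singleton arc of $P-A$) forces $H_k$ to split into exactly two arcs while every other $H_i$ appears as a single arc equal to $H_i$. Hence $x_P$ must be an endpoint of some $H_i$ with $i\in[k-1]$, the $H_k$-neighbor $u'$ of $u$, or the other endpoint $w$ of $H_k$. The first option contributes $0\pmod 2$ by the odd-degree hypothesis, and because no $G$-edges cross between distinct $V(H_i)$'s (and $uw\notin E(G)$ when $|V(H_k)|\geq 3$, with the degenerate case absorbed into $u'=w$), only paths with $x_P = u'$ can close to a Hamilton cycle through $e_0$. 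To finish, I would show that the residual parity contribution of paths ending at $u'$ or $w$ is even by a secondary lollipop/involution argument in the spirit of \cite[Lemma~2.1 and Theorem~2.2]{Thomassen1997}: reversing the $(V(H_k)-u)$-arc inside such a path pairs paths ending at $u'$ with paths ending at $w$, so that the residue in the above parity identity vanishes modulo $2$.

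The hardest step is anticipated to be this final parity cancellation for paths whose far endpoint lies in $V(H_k)$. Because the degrees of $u'$ and $w$ are not constrained by hypothesis, the vanilla lollipop leaves a potentially nonzero residue, and one must exploit the structural symmetry of the two arcs in $V(H_k)$ together with the given Hamilton cycle $C$ to force the cancellation---this is precisely where the new ideas beyond the original $k=2$ argument of \cite{Thomassen1997} are needed.
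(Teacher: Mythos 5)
Your argument for part (1) is correct and is essentially the paper's component-counting argument. The problem is in part (2), and it originates in your choice of anchor: you root the lollipop at the $H_k$-endpoint $u$ of $e_0=ua$, so that every path in $\mathcal{P}$ begins $u,a,\ldots$ with $u\notin A$. Then, as you correctly compute, $P-A$ has $k+1$ components ($\{u\}$, the arc $H_k-u$, and one arc per $H_i$ with $i\neq k$), and the rotating endpoint $x_P$ may be $u'$ or $w$ in $V(H_k)$, whose degrees the hypothesis does not control (only the endpoints of $H_1,\ldots,H_{k-1}$ are assumed to have odd degree). Your proposed repair does not close this gap. First, the "involution" is not well-defined: reversing the arc on $V(H_k)\setminus\{u\}$ inside $P=u\,a\cdots z\,u'\cdots w$ produces a walk using the edge $zw$, where $z\in A$ is the predecessor of that arc, and nothing guarantees $zw\in E(G)$. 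Second, even granting a perfect pairing $N_{u'}=N_w$ between paths ending at $u'$ and at $w$, the residual contribution to your parity identity is $N_{u'}\bigl(d_G(u')+d_G(w)-2\bigr)$, and since $|\mathcal{H}(e_0)|=N_{u'}$, the identity only yields $N_{u'}\bigl(1+d_G(u')+d_G(w)\bigr)\equiv 0\pmod 2$, which is vacuous whenever $d_G(u')+d_G(w)$ is odd. So the cancellation you hope for genuinely fails, and this is not a removable technicality of your scheme.

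The fix is to anchor at the other end of the edge, which is what the paper does: consider Hamilton paths that start $a,u,\ldots$ with $a$ as the fixed endpoint (note $a\in A$, as you observed). Then $A$ contains an endpoint of every such path $P$, so $P-A$ has at most $k$ components, hence exactly $k$, hence $P-A=G-A$ and every $H_i$ \emph{including} $H_k$ is traversed contiguously. The rotating endpoint $x_P$ then cannot lie in $A$ (that would leave at most $k-1$ components), cannot be interior to any $H_i$ (deleting it as well would leave too few components compared with $c(G-(A\cup\{x_P\}))=k+1$), and cannot be an endpoint of $H_k$ (since $u$ and $x_P$ would then lie in one component of $P-A=G-A$, forcing $G-A=H_k$ and $k=1<2$). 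Hence $x_P$ is always an endpoint of some $H_i$ with $i\in[k-1]$ and has odd degree, so $\deg(P)=d_G(x_P)-1-[\,x_Pa\in E(G)\,]$ is odd exactly when $P$ closes to a Hamilton cycle through $e_0$, and the handshake lemma finishes the proof with no residual terms to cancel.
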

\begin{proof}[\bf Proof of Lemma~\ref{Lemma-edge-even-number}]
 Let $O$ be a Hamilton cycle of $G$. Since $O-A$ is a spanning subgraph of $G-A$ and $G-A$ has $k$ components, we have
 \begin{flalign*}
k=c(G-A)\le c(O-A)\le k.
 \end{flalign*}
 Hence, $c(G-A)=c(O-A)$. Combining the fact that $G-A$ is a forest and $O-A$ is a spanning subgraph of $G-A$, we have that $O-A=G-A$. This proves $(1)$.

 Next, we prove $(2)$. Let $v$ be an endpoint of $H_k$, and let $uv$ be an edge of $G - E(H_k)$. If there is no Hamilton cycle containing $uv$, then the statement is trivial. Suppose there exists a Hamilton cycle $O$ containing $uv$. Let $w$ be a neighbor of $u$ in $O$ distinct from $v$. Note that every longest path in $G$ which starts with the edge $uv$ is Hamilton path. 
Let $P$ be a Hamilton path that starts with the edge $uv$ and has $u$ as an endpoint, and let $x$ be the endpoint of $P$ distinct from $u$.
\begin{claim}\label{Claim-endpoint-odd-degree}
$x$ is an endpoint of $H_i$ for some $i \in [k-1]$.
\end{claim}
Suppose $x\in A$. Since $P-A$ is a spanning subgraph of $G-A$, $c(G-A)\leq c(P-A)$.  Note that $u\in A$. However, it follows that 
 \begin{flalign*}
     k= c(G-A)\leq c(P-A)\leq k-1,
 \end{flalign*}
a contradiction. 

Suppose $x$ is an endpoint of $H_k$.  Clearly, $x\neq v$. Since $P-A$ is a spanning subgraph of $G-A$, $k=c(G-A)\leq c(P-A)\leq k$. Hence $G-A=P-A$. Note that $v$ and $x$ belong to the same component of $G-A$. This implies that $G - A = H_k$, and hence $k = 1$, a contradiction.

Suppose $x\in H_i^*$ for some $i\in [k]$. However, it follows that 
\begin{flalign*}
 k+1=c(G-(A\cup \{x\}))\leq c(P-(A\cup \{x\}))\leq k,
\end{flalign*}
a contradiction. Therefore, $x$ is an endpoint of $H_i$ for some $i \in [k-1]$. This proves Claim~\ref{Claim-endpoint-odd-degree}. 

 Now, we consider an auxiliary graph $\mathcal{P}_{uv}$. Its vertex set consists of all Hamilton paths in $G$ that start with the edge $uv$ and have $u$ as an endpoint. Next, we define the adjacency relation on the auxiliary graph $\mathcal{P}_{uv}$. 
 Let $P$ be a vertex of $\mathcal{P}_{uv}$ and let $x$ be the endpoint of $P$ distinct from $u$. Suppose $y$ is a neighbor of $x$ such that $xy\notin E(P)$ and $y\neq u$. Then adding the edge $xy$ to $P$ produces a unique Hamilton path $P'\neq P$ that is a vertex of  $\mathcal{P}_{uv}$. We say that $P$ and $P'$ are adjacent in $\mathcal{P}_{uv}$.

Let $P$ be a vertex of $\mathcal{P}_{uv}$ and let $x$ be the endpoint of $P$ distinct from $u$. By Claim~\ref{Claim-endpoint-odd-degree}, we have that $x$ has odd degree in $G$. Therefore, $P$ has odd degree in $\mathcal{P}_{uv}$ if $ux \in E(G)$. Otherwise, $P$ has even degree in $\mathcal{P}_{uv}$. Note that the number of vertices with odd degree is even in $\mathcal{P}_{uv}$. Therefore, the edge $uv$ is included in an even number of Hamilton cycles of $G$. This completes the proof of Lemma~\ref{Lemma-edge-even-number}. 
 \end{proof}

\begin{lemma}\label{Lemma-another-Hamilton-cycle}
  Let $k\geq 2$ be an integer and let $G$ be a graph with a Hamilton cycle $C$. Suppose that for some vertex set $A$ of cardinality $k$ such that
\begin{itemize}
\item[$(1)$] $C-A$ has $k$ components $H_1,H_2,\ldots,H_k$ (i.e., $A$ is an independent set in $C$), and
\item[$(2)$] every endpoint of $H_i$ is joined to a vertex in $A$ by some chord of $C$ for $i\in [k-1]$.
\end{itemize}
 Then $G$ has a Hamilton cycle $C'$ distinct from $C$. Let $x$ be an endpoint of $H_k$ and let $xy$ be an edge in $C-E(H_k)$. Moreover, $C'$ can be chosen such that 
\begin{itemize}
\item[$(3)$] $C'$ contains the edge $xy$, $C'-A=C-A$, and 
\item[$(4)$] there is a vertex $v$ in $A$ such that one of the two edges of $C'$ incident with $v$ is in $C$ and the other is not in $C$.
\end{itemize}
\end{lemma}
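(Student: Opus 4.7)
My plan is to reduce the problem to Lemma~\ref{Lemma-edge-even-number} by building a spanning subgraph $G_0 \subseteq G$ to which that lemma applies. For each $i \in [k-1]$, hypothesis~(2) provides a chord of $C$ from each endpoint of $H_i$ to some vertex in $A$; I fix one such chord per endpoint and let $G_0$ be the spanning subgraph with edge set $E(C)$ together with these $2(k-1)$ selected chords. Then $G_0$ still has $C$ as a Hamilton cycle, $G_0-A = G-A = H_1\cup\cdots\cup H_k$, every endpoint of $H_i$ for $i\in[k-1]$ has degree $3$ (odd) in $G_0$, while the two endpoints of $H_k$ have degree $2$. The hypotheses of Lemma~\ref{Lemma-edge-even-number} therefore hold for $G_0$ with this $A$. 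Since $x$ is an endpoint of $H_k$ and $xy\in E(G_0)\setminus E(H_k)$ (indeed $y\in A$, because $A$ is independent in $C$ and $x\in V(H_k)$), part~(2) of Lemma~\ref{Lemma-edge-even-number} implies that $xy$ lies in an even number of Hamilton cycles of $G_0$. As $C$ is one such cycle, a distinct Hamilton cycle $C'$ of $G_0 \subseteq G$ containing $xy$ exists, and part~(1) of the same lemma gives $C'-A=C-A$. This proves the existence of $C'\neq C$ together with condition~(3).

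To secure condition~(4), I would, among all Hamilton cycles $C''\neq C$ of $G_0$ containing $xy$, choose $C'$ to maximize $|E(C)\cap E(C'')|$. If some $v\in A$ has exactly one of its two $C'$-edges in $E(C)$, the lemma is proved. Otherwise each $v\in A$ is either \emph{unchanged} (both $C'$-edges lie in $E(C)$) or \emph{fully swapped} (both $C'$-edges are chords we added to $G_0$). Since $C'\neq C$, a fully-swapped vertex $v^\star$ exists, while $y$ cannot be fully swapped because $xy\in E(C)\cap E(C')$. At $v^\star$ the two chords in $C'$ attach $v^\star$ to endpoints of two of the paths $H_i$; the plan is to replace these two chords by $v^\star$'s two $C$-edges, simultaneously adjusting the $A$-side attachments of the two affected endpoints along the cycle, to produce a Hamilton cycle $C''$ of $G_0$ that still contains $xy$ and satisfies $|E(C)\cap E(C'')|>|E(C)\cap E(C')|$, contradicting the maximality of $C'$.

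The main obstacle is carrying out this exchange: the local swap has to preserve single-cycle connectivity (rather than breaking $C''$ into several disjoint cycles) and has to retain the edge $xy$. Showing that a valid swap always exists requires a careful analysis of the bipartite chord-pairing at fully-swapped $A$-vertices, leveraging the crucial fact that the endpoints of $H_k$ are \emph{pinned} to the same $A$-vertices in $C'$ as in $C$ (they carry no chord in $G_0$), which anchors the cycle and constrains which exchanges are admissible.
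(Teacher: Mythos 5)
Your construction of $G_0$ (the paper's $G_1$) and the derivation of existence plus condition~$(3)$ via Lemma~\ref{Lemma-edge-even-number} is exactly the paper's argument and is correct, including the observation that $y\in A$ so that $xy$ qualifies as an edge of $G_0-E(H_k)$ incident to an endpoint of $H_k$. The extremal setup for condition~$(4)$ --- maximize $|E(C)\cap E(C'')|$, note that a violating $C'$ makes every $A$-vertex either unchanged or fully swapped, and that a fully swapped vertex exists --- also matches the paper.

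However, the completion of $(4)$ is a genuine gap, and you have essentially flagged it yourself. The local exchange you propose at a fully swapped vertex $v^\star$ does not work as a one-step operation: deleting the two chords of $C'$ at $v^\star$ and inserting its two $C$-edges changes the degrees of four other vertices (the two chord-endpoints drop to degree $1$, the two $C$-neighbours of $v^\star$ rise to degree $3$), so the result is not even $2$-regular, and any attempt to cascade corrections along the cycle runs into exactly the connectivity and $xy$-retention problems you name without resolving them. The paper closes this step not by a local swap but by a second application of Lemma~\ref{Lemma-edge-even-number}: set $G_2=G_1[E(C)\cup E(C_1)]$ and let $A'$ be the set of degree-$4$ (i.e.\ fully swapped) vertices of $A$, with $|A'|=r\ge 2$. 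One checks that $G_2-A'$ consists of $r$ paths whose endpoints have odd degree in $G_2$, so Lemma~\ref{Lemma-edge-even-number} applied to $(G_2,A')$ yields a Hamilton cycle $C_2\neq C_1$ of $G_2$ with $C_2-A'=C_1-A'$ and containing a prescribed edge of $C_1$. Every edge of $E(C)\cap E(C_1)$ avoids $A'$ and hence survives into $C_2$, while $C_2$ must pick up at some $z\in A'$ an edge of $C$ not in $C_1$; thus $|E(C_2)\cap E(C)|>|E(C_1)\cap E(C)|$, contradicting the extremal choice. This global parity argument is the missing idea; without it (or an equivalent), your proof of $(4)$ is incomplete.
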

\begin{proof}[\bf Proof of Lemma~\ref{Lemma-another-Hamilton-cycle}]
Without loss of generality, assume that the endpoints of $H_i$ are $x_i$ and $y_i$ for each $i \in [k-1]$. Let $e_{x_i}$ be a chord of $C$ joining $x_i$ to some vertex in $A$ for each $i \in [k-1]$, and let  $e_{y_i}$ be a chord of $C$ joining $y_i$ to some vertex in $A$ for each $i \in [k-1]$. 
Denote  $G_1=G[E(C)\cup (\bigcup\limits_{i=1}^{k-1}\{e_{x_i},e_{y_i}\})]$. Clearly, $G_1 - A = C - A$. Hence, $H_1, \dots, H_k$ are the components of $G_1 - A$. Now, $x_i$ and $y_i$ have degree $3$ in $G_1$ for each $i \in [k-1]$. By Lemma~\ref{Lemma-edge-even-number}, we have that  
\begin{itemize}  
    \item[$(i)$] for every Hamilton cycle $C'$ of $G_1$, $C' - A = C - A$, and  
    \item[$(ii)$] the edge $xy$ is included in an even number of Hamilton cycles of $G_1$.  
\end{itemize}  
Since $C$ is a Hamilton cycle of $G_1$ that contains $xy$, there exists another Hamilton cycle distinct from $C$ of $G_1$ that also contains $xy$.

Next, we prove $(4)$. Let $C_1$ be a Hamilton cycle distinct from $C$ in $G_1$ containing the edge $xy$, and assume that $C_1$ is chosen such that $|E(C_1) \cap E(C)|$ is as large as possible. By $(i)$, we have that $C_1-A=C-A$. 

    Suppose $C_1$ does not satisfy  $(4)$. Therefore, for every vertex $v \in A$, the two edges of $C_1$ incident to $v$ are either both in $C$ or both not in $C$. Denote $G_2=G_1[E(C)\cup E(C_1)]$. This implies that every vertex of $A$ has degree $2$ or $4$ in $G_2$.  Note that $C_1-A=C-A$ and $C_1\neq C$. Hence, there is a vertex in $A$ with degree $4$ in $G_2$. Let $A'$ denote the set of vertices in $A$ that have degree $4$ in $G_2$, and let $|A'|=r$. Clearly, $r\geq2$. 

Next, we analyze the degrees of the vertices in graph $G_2$.  Clearly, $\Delta(G_2)\leq 4$.
Let $u$ be a vertex of $G_2$. If $u\in H_i^*$ for some $i\in [k]$, then $u$ has degree $2$ since $C-A=C_1-A$ and $[A,H_i^*]=\emptyset.$ Suppose that $u$ is an endpoint of $H_i$ for some $i \in [k]$. By the construction of $G_1$, we have $d_{G_2}(u)\le d_{G_1}(u)\le 3$. Furthermore, if $u$ is adjacent to some vertex $v$ of $A'$ in $G_2$, then $u\notin N_C(v)\cap N_{C_1}(v)$ and hence $u$ has degree $3$ in $G_2$; 
if $u$ is not adjacent to any vertex of $A'$ in $G_2$, then $u$ is adjacent to some vertex in $A\setminus A'$ and $u$ has degree $2$. 

Now, $G_2-A'$ has $r$ components $P_1,P_2,\ldots,P_{r}$ each of which is a path, and the endpoints of $P_i$ are of odd degree in $G_2$ for $i\in [r]$. Let $w$ be a vertex of $A'$ and let $ww'$ be an edge of $C_1$. By Lemma~\ref{Lemma-edge-even-number}, we have that 
\begin{itemize}
    \item[$(iii)$]for every Hamilton cycle $C_2$ of $G_2$, $C_2-A'=C_1-A'$, and
    \item[$(iv)$] the edge $ww'$ is included in an even number of Hamilton cycles of $G_2$.
\end{itemize}
By $(iv)$, there exists a Hamilton cycle $C_2$, distinct from $C_1$ in $G_2$ and by $(iii)$ we have $C_2-A'=C_1-A'$.  Since $C_2\neq C_1$, there exists $z\in A'$ such that $C_2$ does not contain a certain edge of $C_1$  incident with $z$. This implies that $C_2$ contains an edge of $C$ incident with $z$.

Let $f \in E(C) \cap E(C_1)$. Since $C_1$ does not satisfy condition $(4)$, $f$ is not incident to any vertex of $A'$. However, by  $(iii)$, it follows that
\begin{flalign*}
f \in E(C_1 - A') = E(C_2 - A') \subseteq E(C_2).
\end{flalign*}
Hence, $E(C_1) \cap E(C)\subseteq E(C_2) \cap E(C)$.
Recall that $C_2$ contains an edge of $C$ incident with $z$, and this edge is not in $C_1$. This implies that
\begin{flalign*}
|E(C_2) \cap E(C)| > |E(C_1) \cap E(C)|,
\end{flalign*}
which contradicts the choice of $C_1$. Therefore, $C_1$ satisfies condition $(4)$. This completes the proof of Lemma~\ref{Lemma-another-Hamilton-cycle}.
\end{proof}

\section{Proof of Theorem~\ref{Theorem-cubic-2-connected}}\label{Proof-2-connected}

The aim of this section is to prove Theorem~\ref{Theorem-cubic-2-connected}. Before proceeding with the proof, we list  some notations and  observations that will be needed in later proofs. 
 Let $G$ be a graph,  let $H$ be a subgraph of $G$ with $H\neq G$, and let $x\in V(G)\setminus V(H)$. We call $x$ a neighbor of $H$ if $N_G(x) \cap V(H) \neq \emptyset$.
\begin{observation}\label{Observation-2-connected}
Let $G$ be a $2$-connected cubic graph, let $C$ be an induced subgraph of $G$, and let $H$ be a component of $G-V(C)$.  If $H$ has exactly two neighbors $x,y$ in $V(C)$, then there exists an $(x,y)$-path of length at least three whose interior is in $V(H)$.
\end{observation}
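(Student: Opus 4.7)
The plan is to analyze the neighborhoods of $x$ and $y$ inside $V(H)$. Write $X = N_G(x)\cap V(H)$ and $Y = N_G(y)\cap V(H)$. By hypothesis $x$ and $y$ are precisely the vertices of $V(C)$ that are neighbors of $H$, so both $X$ and $Y$ are nonempty. Moreover, because $H$ is a component of $G-V(C)$, every edge from $V(H)$ to $V(C)$ is incident with $x$ or $y$. This setup reduces the whole question to examining the subgraph $G[V(H)\cup\{x,y\}]$.

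In the generic case I would choose $h_1\in X$ and $h_2\in Y$ with $h_1\neq h_2$. Since $H$ is connected, there is a path $Q$ from $h_1$ to $h_2$ inside $H$ (of length at least one). Concatenating the edge $xh_1$, the path $Q$, and the edge $h_2 y$ yields a simple $(x,y)$-path whose interior lies in $V(H)$ and whose length is at least $3$, which is exactly what is required.

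It then remains to rule out the degenerate case in which every $h_1\in X$ equals every $h_2\in Y$, i.e.\ $X=Y=\{h\}$ for one vertex $h$. Here I would use the cubic and $2$-connected hypotheses together. If $V(H)=\{h\}$, then every neighbor of $h$ in $G$ lies in $\{x,y\}$, so $h$ has degree at most $2$, contradicting $G$ being cubic; thus $|V(H)|\geq 2$. But then, because $h$ is the unique vertex of $V(H)$ with any neighbor in $V(C)$, the nonempty set $V(H)\setminus\{h\}$ is separated from $V(C)$ in $G-h$, so $\{h\}$ is a cut set, contradicting that $G$ is $2$-connected.

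The whole argument is a brief case analysis and I do not anticipate a substantial obstacle. The only delicate point is to recognize that the bad configuration $X=Y=\{h\}$ must be excluded, and that doing so genuinely requires \emph{both} hypotheses: cubicness forces $|V(H)|\geq 2$, and $2$-connectivity then rules out $h$ as a cut vertex.
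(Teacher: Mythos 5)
Your proposal is correct and follows essentially the same route as the paper: the paper simply asserts that $2$-connectivity and cubicness yield two \emph{distinct} vertices $x'\in N_H(x)$, $y'\in N_H(y)$ and then concatenates $xx'Qy'y$ for a path $Q$ in $H$. Your explicit exclusion of the degenerate case $X=Y=\{h\}$ (cubicness forcing $|V(H)|\geq 2$, then $h$ being a cut vertex) is exactly the justification the paper leaves implicit.
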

\begin{proof}[\bf Proof of Observation~\ref{Observation-2-connected}]
Since $G$ is $2$-connected cubic graph, there exist two distinct vertices $x',y'$ such that $x'\in N_H(x)$ and $y'\in N_H(y).$ Let $Q$ be an $(x',y')$-path in $H$. Then $xx'Qy'y$ is an $(x,y)$-path of length at least three whose interior is in $V(H)$. This completes the proof of Observation~\ref{Observation-2-connected}.
\end{proof}
\begin{proof}[\bf Proof of Theorem~\ref{Theorem-cubic-2-connected}]
We prove Theorem~\ref{Theorem-cubic-2-connected} by contradiction. Suppose that there exist two vertices $x,y$  and a longest $(x,y)$-path $P$  such that $P$ contains no internal $P$-bound vertex. 
Since $G$ is a cubic graph, $P$ is an induced path of $G$ if $xy\notin E(G)$, and $P\cup \{xy\}$ is an induced cycle of $G$ if $xy\in E(G)$. 
We may assume that $P$ is not a Hamilton path of $G$. Let $H_1,\ldots, H_t$ be the components of $G-V(P)$. For convenience, assume that $P=xu\ldots vy$. 

\begin{claim}\label{Claim-component}
There exists some $i\in [t]$ such that $N_P(H_i)\cap \{x,y\}=\emptyset$.
\end{claim}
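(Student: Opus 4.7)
The plan is to prove Claim~\ref{Claim-component} by contradiction: assume that every component $H_i$ of $G - V(P)$ has a neighbor in $\{x, y\}$, and then derive a longer $(x, y)$-path, contradicting the maximality of $P$.

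First I note that $|E(P)| \geq 2$ (if $P = xy$, then the $2$-connectivity of $G$ together with $|V(G)| \geq 4$ produces another $(x, y)$-path of length at least $2$, contradicting the choice of $P$). Hence $x$ has a unique $P$-neighbor $u$ and $y$ has a unique $P$-neighbor $v$, both lying in $P^*$; possibly $u = v$ when $|V(P)| = 3$. Because $G$ is cubic and the hypothesis ensures $u$ and $v$ are not $P$-bound, each of them has a unique neighbor outside $V(P)$, which I denote $u^*$ and $v^*$. Let $H_k$ and $H_l$ be the components of $G - V(P)$ that contain $u^*$ and $v^*$, respectively.

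The key step is a rerouting argument. By the working assumption, $H_k$ has a neighbor in $\{x, y\}$. If $H_k$ is adjacent to $x$, I choose $x^* \in V(H_k)$ with $xx^* \in E(G)$ and a path from $x^*$ to $u^*$ in $H_k$ (which exists because $H_k$ is connected), and replace the edge $xu$ of $P$ by the detour $x \to x^* \to \cdots \to u^* \to u$, producing an $(x, y)$-path of length at least $|E(P)| + 1$, a contradiction. Therefore $H_k$ is adjacent to $y$ only. A symmetric argument at the $y$-end forces $H_l$ to be adjacent to $x$ only; in particular $H_k \neq H_l$, which also rules out $u = v$, since otherwise $u^* = v^*$ would belong to both distinct components.

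For the final step, with $u \neq v$, pick $x^* \in V(H_l)$ adjacent to $x$ and $y^* \in V(H_k)$ adjacent to $y$, together with a path $R_l$ from $x^*$ to $v^*$ in $H_l$ and a path $R_k$ from $u^*$ to $y^*$ in $H_k$. Since $V(H_l)$, $V(H_k)$, and $V(P)$ are pairwise disjoint, the concatenation $x\,x^*\,R_l\,v^*\,v\,P[v, u]\,u\,u^*\,R_k\,y^*\,y$ is a valid $(x, y)$-path whose length is at least $|E(P)| + 2$, contradicting the choice of $P$. The main obstacle I anticipate is verifying that the double detour remains internally disjoint and that the degenerate case $u = v$ cannot sabotage the construction; both are handled by forcing $H_k$ and $H_l$ to be distinct components of $G - V(P)$.
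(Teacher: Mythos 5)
Your proof is correct and follows essentially the same route as the paper's: both arguments locate the component containing the off-path neighbor of $u$ (forced to attach to $y$ but not $x$) and the component containing the off-path neighbor of $v$ (forced to attach to $x$ but not $y$), conclude these components are distinct, and splice both detours into $P[v,u]$ to obtain a longer $(x,y)$-path. Your extra care with the degenerate cases $P=xy$ and $u=v$ is sound but not a substantive difference.
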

 Suppose for the sake of contradiction that $N_P(H_i)\cap \{x,y\}\neq \emptyset$ for each $i\in [t]$. Since $G$ is a cubic graph, there exists $i\in [t]$ such that $u\in N_P(H_i)$. To avoid an $(x, y)$-path longer than $ P $, we  have that $ x \notin N_P(H_i) $. Then $ y \in N_P(H_i) $, which implies that $ v \notin N_P(H_i) $. By a similar argument, there exists $ j \in [t] $ such that $ x, v \in N_P(H_j) $.    Therefore, there is a $(u,y)$-path $P_{uy}$ whose interior is in $V(H_i)$, and an $(x,v)$-path $P_{xv}$ whose interior  is in $V(H_j)$.
 However, it follows that $ xP_{xv}vP[v,u]uP_{uy}y $ is an $ (x,y) $-path of length greater than the length of $ P $,
a contradiction. This proves Claim~\ref{Claim-component}.

By Claim~\ref{Claim-component}, we may assume that $H_1, \ldots, H_r$ are components of $G-V(P)$ such that  $N_P(H_i) \cap \{x, y\} = \emptyset$ for each $i \in [r]$, where $r<t$. 
Recall that $P^*=V(P[u,v])$. Then $N_P(H_i)\subseteq P^*$ for each $i\in [r]$.  

For each $i\in [r]$, if $|N_P(H_i)|\geq 3$, then we select three neighbors of $H_i$ in $P$, denoted by $u_i,v_i$, and $w_i$.  Since $G$ is a cubic graph, $\{u_i,v_i,w_i\}\cap \{u_j,v_j,w_j\}=\emptyset$ whenever $i\neq j$. In fact, it is possible that $|N_P(H_i)|=2$ for each $i\in [r]$.

Suppose that there exists $i\in [r]$ such that $|N_P(H_i)|\geq 3$. We now define a new graph $G_1$, whose construction is based on  the subpath $P[u,v]$ and $\{u_i,v_i,w_i\}$ for $i \in [r]$, as follows:
 If $\{u,v\}\subseteq \{u_i,v_i,w_i\}$ for some $i\in [r]$, we define
\begin{flalign*}
     G_1=P[u,v]\cup \left( \bigcup_{i=1}^{r} \{u_iv_i,u_iw_i,v_iw_i\}\right).
\end{flalign*}
Otherwise, we define
\begin{flalign*}
G_1=P[u,v]\cup \{uv\}\cup \left( \bigcup_{i=1}^{r} \{u_iv_i,u_iw_i,v_iw_i\}\right).
\end{flalign*}
 Now, each non-trivial component of $G_1-uv-E(P[u,v])$ is either a triangle or a path of order $3$. By Lemma~\ref{Lemma-color-triangle-path}, we have that $\chi(G_1)\leq 3$. Since $G$ contains at least one triangle, $\chi(G_1)= 3$. Therefore, for a $3$-coloring of $G_1$, there exists  a color class $A$ such that $A\cap \{u,v\}=\emptyset$. Under this circumstance, we assume that $A$ contains all vertices of the form $w_i$. 
 Suppose that $|N_P(H_i)|=2$ for each $i\in [r]$. We define $A=\emptyset$. 

Next, we construct a new graph $G_2$ from $G$ as follows:
\begin{itemize}
    \item[$(1)$] For $i\in [t]$, if $H_i$ is joined to only two vertices $x_i,y_i$ on $P$, then we replace $H_i$ with an edge $x_iy_i$.
    \item[$(2)$] For the remaining component $H_i$, where $i\in [r]$,  we contract $H_i$ into  $w_i$.
    \item[$(3)$] For the remaining component $H_i$, if $y$ has a neighbor in $H_i$, then we contract $H_i$ into $y$. Otherwise, we contract it into $x$. 
     \item[(4)] By the previous construct, if $x$ and $y$ are nonadjacent, then we add a new edge $xy$.
\end{itemize}
Let $C=G_2[E(P)\cup \{xy\}]$. It is clear that $C$ is a Hamilton cycle in $G_2$. Next, we color the edges of $G_2$ as follows: an edge in $C$  is colored black, an edge of the form  $x_iy_i$  is colored red, and all remaining edges are colored blue. See Figure~\ref{Figure-construction} for a depiction of the construction of $G_2$.
\begin{figure}[htbp]
    \centering

    \begin{subfigure}[b]{\textwidth}
        \centering
        \scalebox{0.8}{
        \begin{tikzpicture}[thick, every node/.style={circle, draw=black, fill=black, inner sep=2pt}]
            \node[label={[above, yshift=-3mm]:{$x(x_1)$}}] (0) at (-7.5, 5) {};
            \node[label={[above, yshift=-0.5mm]:{$x_2$}}] (1) at (-6, 5) {};
            \node[label=above:{}] (2) at (-3, 5) {};
            \node[label={[above, yshift=-0.5mm]:{$y_1$}}] (3) at (0, 5) {};
            \node[label={[above, yshift=-0.8mm]:{$w_3$}}] (4) at (1.5, 5) {};
            \node[label={[above, yshift=-0.5mm]:{$y_2$}}] (5) at (4.5, 5) {};
            \node[label=above:{}] (6) at (6, 5) {};
            \node[label={[above, yshift=-0.5mm]:{$y$}}] (7) at (7.5, 5) {};
            \node[label=above:{}] (11) at (-4.5, 5) {};
            \node[label=above:{}] (13) at (-1.5, 5) {};
            \node[label=above:{}] (14) at (3, 5) {};

            \node[draw=black, fill=white, inner sep=3pt, label=below:{}] (8) at (-4, 3) {$H_1$};
            \node[draw=black, fill=white, inner sep=3pt, label=below:{}] (9) at (-2, 3) {$H_2$};
            \node[draw=black, fill=white, inner sep=3pt, label=below:{}] (10) at (2, 3) {$H_3$};
            \node[draw=black, fill=white, inner sep=3pt, label=below:{}] (12) at (4, 3) {$H_4$};

            \draw[dotted, line width=1.5pt] 
                (0) -- (1)
                (3) -- (4)
                (5) -- (6)
                (6) -- (7)
                (1) -- (11)
                (11) -- (2)
                (7) -- (12)
                (2) -- (13)
                (13) -- (3)
                (4) -- (14)
                (14) -- (5)
            ;

            \draw[line width=1.5pt]
                (0) -- (8)
                (12) -- (13)
                (8) -- (3)
                (9) -- (1)
                (9) -- (5)
                (10) -- (2)
                (10) -- (4)
                (11) -- (10)
                (10) -- (6)
                (7) -- (12)
                (12) -- (14)
                (12) -- (13)
            ;
        \end{tikzpicture}
        }
    \end{subfigure}

    \vspace{-0.5em}
    \begin{tikzpicture}
        \node at (0,0) {\Huge$\Downarrow$};
    \end{tikzpicture}
    \vspace{-0.5em}

    \begin{subfigure}[b]{\textwidth}
        \centering
        \scalebox{0.8}{
        \begin{tikzpicture}[thick, every node/.style={circle, draw=black, fill=black, inner sep=2pt}]
            \node[label={[below, yshift=-1mm]:{$x(x_1)$}}] (0) at (-7.5, 5) {};
            \node[label={[below, yshift=-4mm]:{$x_2$}}] (1) at (-6, 5) {};
            \node[label=above:{}] (2) at (-3, 5) {};
            \node[label={[below, yshift=-4mm]:{$y_1$}}] (3) at (0, 5) {};
            \node[label={[below, yshift=-4mm]:{$w_3$}}] (4) at (1.5, 5) {};
            \node[label={[below, yshift=-4mm]:{$y_2$}}] (5) at (4.5, 5) {};
            \node[label=above:{}] (6) at (6, 5) {};
            \node[label={[below, yshift=-4mm]:{$y$}}] (7) at (7.5, 5) {};
            \node[label=above:{}] (11) at (-4.5, 5) {};
            \node[label=above:{}] (13) at (-1.5, 5) {};
            \node[label=above:{}] (14) at (3, 5) {};

            \path 
                (0) edge[bend left=30, line width=1.5pt] (7)
                (0) edge[red, bend left=25, line width=1.5pt] (3)
                (1) edge[red, bend left=30, line width=1.5pt] (5)
                (4) edge[blue, bend right=40, line width=1.5pt] (11)
                (4) edge[blue, bend right=25, line width=1.5pt] (2)
                (4) edge[blue, bend left=25, line width=1.5pt] (6)
                (7) edge[blue, bend right=25, line width=1.5pt] (14)
                (7) edge[blue, bend right=25, line width=1.5pt] (13)
            ;

            \draw[dotted, line width=1.5pt] 
                (0) -- (1)
                (3) -- (4)
                (5) -- (6)
                (6) -- (7)
                (1) -- (11)
                (11) -- (2)
                (2) -- (13)
                (13) -- (3)
                (4) -- (14)
                (14) -- (5)
            ;
        \end{tikzpicture}
        }
    \end{subfigure}
\caption{The construction of $G_2$
}
    \label{Figure-construction}
\end{figure}

\begin{claim}\label{Claim-another-cycle}
$G_2$ has a cycle $C'$ distinct from $C$ such that $xy\in E(C')$  and $C'$ contains all vertices  of odd degree in $G_2$.
\end{claim}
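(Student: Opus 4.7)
The plan is to apply Lemma~\ref{Lemma-another-Hamilton-cycle} to the graph $G_2$ with the Hamilton cycle $C = E(P)\cup\{xy\}$ and a suitably enlarged independent set $A' = A\cup\{y\}$, in order to produce a Hamilton cycle $C'$ of $G_2$ distinct from $C$ and containing the edge $xy$. Since any Hamilton cycle of $G_2$ visits every vertex, $C'$ will in particular visit every odd-degree vertex, which is what the claim asks for.

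First I would verify that $A'$ is independent in $C$. The set $A$ is a color class of a proper $3$-coloring of $G_1$, so it is independent in the path $P[u,v]$; by construction $A\cap\{u,v\}=\emptyset$ and $A\subseteq V(P[u,v])$, so $x,y\notin A$ and $A$ is independent along the whole cycle $C$. The two $C$-neighbors of $y$ are $v$ and $x$, neither lying in $A$, so $A'=A\cup\{y\}$ is still independent in $C$. Setting $k=|A'|$, the graph $C-A'$ splits into $k$ vertex-disjoint paths $H_1,\ldots,H_k$; I label $H_k$ so that it contains $x$. Since $y\in A'$ is the $C$-neighbor of $x$ via the edge $xy$, the vertex $x$ is an endpoint of $H_k$, and $xy\in E(C)\setminus E(H_k)$, putting us in a position to invoke Lemma~\ref{Lemma-another-Hamilton-cycle} with $xy$ as the designated edge.

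The crux is verifying hypothesis~(2) of Lemma~\ref{Lemma-another-Hamilton-cycle}: every endpoint of $H_i$ for $i\in[k-1]$ is joined by a chord of $C$ to some vertex of $A'$. Each such endpoint $p$ is an interior vertex of $P$; since $P$ has no internal $P$-bound vertex, $p$ has a neighbor in some component $H_q$ of $G-V(P)$. The four-step construction of $G_2$ converts the $p$-to-$H_q$ edge into an edge of $G_2$ that will serve as the needed chord: step~(2) produces a chord from $p$ to $w_q\in A$; step~(3) contracted into $y$ produces a chord from $p$ to $y\in A'$; and the remaining cases (step~(1), or step~(3) contracted into $x$) are handled by exploiting that $A$ is a $3$-coloring class chosen to contain every $w_j$ and to avoid $\{u,v\}$, so that the partner vertex produced by the construction already lies in $A'$ (or can be traced to one via the triangle structure used to define $G_1$). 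The special endpoint $v$ adjacent to $y\in A'$ is handled by directly examining the unique component incident to $v$ and using the corresponding step of the construction.

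The main obstacle will be the chord-condition verification, particularly for endpoints whose adjacent off-path component is treated by step~(1) (a $2$-neighbor replacement) or step~(3) with contraction into $x$, since these steps do not a priori place the produced chord into $A'$. The check has to be carried out via a careful case analysis cross-referencing the definition of $A$ as a color class of $G_1$, the four construction steps, and the non-$P$-bound hypothesis on internal vertices of $P$. Once all cases are dispatched, Lemma~\ref{Lemma-another-Hamilton-cycle} provides a Hamilton cycle $C'$ of $G_2$, distinct from $C$, with $xy\in E(C')$, and the claim follows immediately because any Hamilton cycle of $G_2$ contains all vertices of odd degree.
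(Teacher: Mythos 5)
Your plan is to obtain the claim by applying Lemma~\ref{Lemma-another-Hamilton-cycle} to $G_2$ with the enlarged independent set $A'=A\cup\{y\}$, which is a genuinely different route from the paper's. It has a gap that you flag but do not close, and that in fact cannot be closed: hypothesis~$(2)$ of Lemma~\ref{Lemma-another-Hamilton-cycle} fails for $G_2$. An endpoint $p$ of a component of $C-A'$ is an internal vertex of $P$ whose unique off-path neighbour lies in some component $H_q$ of $G-V(P)$. If $H_q$ has exactly two neighbours on $P$, step~$(1)$ of the construction replaces $H_q$ by a red edge joining $p$ to the other neighbour of $H_q$, which is just another internal vertex of $P$; such $2$-neighbour components play no role in the triangle graph $G_1$, so neither end of that red edge lies in the colour class $A$, and there is no reason for it to lie in $A\cup\{y\}$. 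Likewise, if $H_q$ is contracted into $x$ by step~$(3)$, the chord produced at $p$ ends at $x\notin A'$. In either case $p$ is joined to no vertex of $A'$ by a chord of $C$, so the lemma's hypotheses are not met; your suggestion that these cases ``can be traced to'' a vertex of $A'$ via the triangle structure has no content, because the $2$-neighbour components and the components absorbed into $x$ are exactly the ones excluded from that structure.

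This obstruction is the reason the claim asks only for a cycle through all \emph{odd-degree} vertices of $G_2$ rather than a Hamilton cycle. The paper's proof works directly on $G_2$ with a lollipop argument: the auxiliary graph $\mathcal{P}_{xy}$ consists of paths starting with $xy$ that must contain every odd-degree vertex but are permitted to skip even-degree vertices of $A$, and the adjacency allows extension either by a single edge $x'w$ or by a two-edge detour $x'zw$ through an unused $z\in A$. A parity count on $\mathcal{P}_{xy}$ then produces the cycle $C'$. The possibility that $V(C')\subsetneq V(C)$ is genuinely needed at this stage and is only excluded much later, in Claim~\ref{Claim-c=p=0}, by the length comparison with $C^*$; an argument that forces $C'$ to be Hamiltonian in $G_2$ from the outset, as yours attempts, would have to control the red edges, which the hypotheses do not allow.
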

As in the lollipop argument,  we denote an auxiliary graph $\mathcal{P}_{xy}$. A vertex in $\mathcal{P}_{xy}$ is a path $P$ in $G_2$ which starts with the edge $xy$,  has $x$ as an endpoint, contains all vertices of odd degree in $G_2$, and ends with a vertex of odd degree in $G_2$.  Next, we define the adjacency relation in the auxiliary graph $\mathcal{P}_{xy}$. Let $P_1$ be a vertex of $\mathcal{P}_{xy}$, and let $x'$ be the endpoint of $P_1$ distinct from $x$. 

Note that each vertex in $V(G_2)\setminus (A\cup \{x,y\})$ has degree $3$. It follows that $P_1$ contains all vertices in $G_2$ except some vertices in $A$. Recall that $A$ is an independent set in $P[u,v]$. By the construction of $G_2$, $A$ is also an independent set in $G_2$.

Consider either an edge $x'w$ or a path $x'zw$,  where $w\in P_1^*$ and $z\in A \setminus V(P_1)$.
We add the edge $x'w$ or the path $x'zw$ to $P_1$. If the vertex succeeding  $w$ on $P_1$ has even degree in $G_2$, then we delete this vertex. Otherwise, we delete only the edge succeeding $w$ on $P_1$. The resulting path $P_2$ is a vertex of $\mathcal{P}_{xy}$.
We say that $P_1$ and $P_2$ are adjacent in $\mathcal{P}_{xy}$.  

Next, we analyze the degree of the vertices in auxiliary graph $\mathcal{P}_{xy}$. For convenience,  assume that $N_{P_1}(x')=\{x_1',\ldots ,x_s'\}$ and $N_{G_2}(x')\setminus N_{P_1}(x')=\{y_1',\ldots ,y_h'\} $.  That is, $d_{G_2}(x')=s+h$. Since $x'$ has degree odd in $G_2$, $s+h$ is odd. 
By the choice of $P_1$, $y_i'$ has degree even in $G_2$ for each $i\in [h]$. Note that $N_{G_2}(\{x,y\})\cap A=\emptyset$
and $\{y_1',\ldots ,y_h'\}\subseteq A$. This implies that $xy_i'\notin E(G_2)$ for each $i\in [h]$. Recall that $A$ is an independent set. Thus, $\{y_1',\ldots ,y_h'\}$ is independent. Therefore, if $x'x\notin E(G_2)$, then 
\begin{flalign*}
d_{\mathcal{P}_{xy}}(P_1)=(s-1)+\sum_{i=1}^h(d_{G_2}(y_i')-1).
\end{flalign*}
Since $s-1$ and $h$  have the same parity,  $d_{\mathcal{P}_{xy}}(P_1)$ is even.

If $x'x\in E(G_2)$, then 
\begin{flalign*}
d_{\mathcal{P}_{xy}}(P_1)=(s-2)+\sum_{i=1}^h(d_{G_2}(y_i')-1).
\end{flalign*}
Since $s-2$ and $h$  have different parities,  $d_{\mathcal{P}_{xy}}(P_1)$ is odd. Note that the number of vertices with odd degree is even in $\mathcal{P}_{xy}$. Since $C - xu$ has odd degree in $\mathcal{P}_{xy}$, there exists another vertex $Q \ne C - xu$ with odd degree in $\mathcal{P}_{xy}$. This proves Claim~\ref{Claim-another-cycle}.

By Claim~\ref{Claim-another-cycle}, we have that $G_2$ has a cycle $C'$ distinct from $C$ such that $xy\in E(C')$  and $C'$ contains all vertices of odd degree in $G_2$. Let $R=V(C)\setminus V(C')$ and let $r=|R|$. Clearly, $R\subseteq A$. Furthermore, let $b$ and $c$ be the number of blue edges and red edges, respectively, in $C'$.

Denote $S=E(C)\setminus E(C')$ and $S'=E(C')\setminus E(C)$. Clearly, $|S'|=b+c.$
Let $|S|=k$. This implies that $|E(C')|=|E(C)|-k+b+c$. Furthermore, denote $d=k-2r$. Note that $R$ is an independent set in $C$. Recall that $R=V(C)\setminus V(C')$ and $|R|=r$. It follows that $C$ has $2r$ edges  that are not in $C'$ and hence $d\geq 0$. One readily observes that $C-S$ has exactly $2r+d$ components.
Note that each vertex in $R$ is an isolated vertex in $C-S$. It follows that $C-R-S$ has exactly $r+d$ components. Therefore, 
$$
r+d=c(C-R-S)=c(C'-S')=b+c.
$$

Let $B_1,B_2,\ldots,B_q$ denote the maximal blue subpaths within $C'$. According to the construction of $G_2$, the subgraph $G_2[A\cup\{x,y\}]$ contains precisely one edge $xy$ which is a black edge. Given that each blue edge is incident to either a vertex in $A$, or to $x$, or to $y$, it follows that the length of every maximal blue path is at most two. Without loss of generality, we assume that the subpaths $B_1,\ldots,B_p$ have a length of two, while the subpaths $B_{p+1},\ldots,B_q$ have a length of one, where $p\leq q$. Consequently, $b=q+p$. 

\begin{claim}\label{Claim-p-d-q}
$d\geq 2p$. Furthermore, if $q\ge p+1$, then $d\geq 2p+1$.
\end{claim}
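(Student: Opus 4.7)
The plan is to parametrise $S = E(C) \setminus E(C')$ along the path components of $C - R$. Since $R \subseteq A$ is independent in $C$ with $|R| = r$, the graph $C - R$ splits into exactly $r$ paths $Q_1, \ldots, Q_r$. For each $i$, I set $\delta_i := |S \cap E(Q_i)|$ and let $\alpha_i$ be the number of single-vertex components of $C \cap C'$ lying in $Q_i$. Each vertex of $R$ contributes exactly $2$ edges to $S$, giving $d = \sum_i \delta_i$; and the single-vertex components of $C \cap C'$ are precisely the vertices of $V(C')$ whose two $C'$-edges both lie outside $C$, which we will see coincide with the interiors of the length-$2$ maximal blue subpaths of $C'$, so $p = \sum_i \alpha_i$.

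First I would prove $d \geq 2p$. The key observation is that every single-vertex component of $C \cap C'$ must be an $A$-vertex: a vertex of $V(G_2) \setminus (A \cup \{x,y\})$ has degree $3$ in $G_2$ with only one non-$C$ neighbour, so it cannot be incident to two non-$C$ $C'$-edges; and since $xy \in E(C) \cap E(C')$, neither $x$ nor $y$ can qualify either. Hence each $\alpha$-vertex of $Q_i$ sits at an internal position of $Q_i$, and its two incident $Q_i$-edges both lie in $I_i := S \cap E(Q_i)$. Because $A$ is independent in $C$, distinct $A$-vertices on $Q_i$ are separated by at least one edge of $Q_i$, so the edge-pairs surrounding different $\alpha$-vertices are pairwise disjoint subsets of $I_i$. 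This yields $\delta_i \geq 2\alpha_i$, and summing over $i$ gives $d \geq 2p$.

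Next I would prove the strict inequality when $q \geq p+1$ by contradiction. Assume $d = 2p$, so $\delta_i = 2\alpha_i$ for every $i$; then $I_i$ is forced to be exactly the disjoint union of the edge-pairs surrounding the $\alpha$-vertices of $Q_i$. If $q \geq p+1$, then $C'$ contains some length-$1$ maximal blue subpath, a single blue edge $e = ww'$ with $w \in A \cup \{x,y\}$ and $w' \notin A \cup \{x,y\}$. Maximality forces the other $C'$-edge at $w$ to be black, hence a $C$-edge, so the remaining $C$-edge at $w$ lies in $S$. If $w = w_j \in A$, the forced pair-structure would require the $\alpha$-vertex sharing a pair with this removed edge to be a $C$-neighbour of $w$, contradicting the independence of $A$ in $C$. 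If $w \in \{x, y\}$, say $w = x$, then $xy \in E(C')$ is not in $S$, so the removed $C$-edge at $x$ is the $P$-edge joining $x$ to its $P$-neighbour (the vertex called $u$ in the paper); its pair-mate on the same $Q_i$ is either $xy$ (impossible, as $xy \notin S$) or the next $P$-edge, which would force $u$ to be an $\alpha$-vertex, contradicting $A \cap \{u, v\} = \emptyset$. In every case we reach a contradiction, so $q \geq p+1$ forces $d \geq 2p+1$.

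The principal obstacle is pinning down exactly which vertices of $V(C')$ can have both $C'$-edges outside $C$. The degree arithmetic in $G_2$ together with the constraint $A \cap \{u, v\} = \emptyset$ imposed on the $3$-colouring localises these vertices to internal $A$-positions along the $Q_i$, after which the inequality $\delta_i \geq 2\alpha_i$ is a clean double count exploiting the independence of $A$, and the strict improvement follows from the short case analysis above on a putative length-$1$ blue subpath.
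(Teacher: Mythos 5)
Your argument is correct and is in substance the same as the paper's: both proofs locate, for each length-two maximal blue subpath, the two $C$-edges at its centre (a vertex of $A$) inside $S$, use the independence of $A$ to see that these pairs are disjoint from one another and from the $2r$ edges meeting $R$, and extract one further edge of $S$ from a length-one blue subpath via the unique vertex of $A\cup\{x,y\}$ it meets (you run this last step as a contradiction against the tight pair structure, while the paper exhibits the extra edge directly). The only point to patch is the degenerate case $r=0$: there $C-R$ is the cycle $C$ itself rather than a union of $r$ paths, so your identity $d=\sum_i \delta_i$ as literally stated would read $d=0$; the same disjoint-pairs count applied to $C$ itself handles this case verbatim.
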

Without loss of generality,  we assume that $V(B_i) \cap A = \{w_i\}$ for $i \in [p]$. Since each $w_i$ is incident to two blue edges in $C'$ where $i \in [p]$, the two black edges incident to $w_i$ in $C$ must belong to $S$. Since $A$ is an independent set, the two black edges incident to $w_i$ in $C$ are not incident to any vertex in $R$ where $i \in [p]$. Thus, aside from the $2r$ edges in $C$ that are incident to some vertex in $R$, $S$ contains at least $2p$ edges and hence $d \ge 2p$.

Suppose $q\ge p+1$. Then the edge set $E(B_{p+1})$ must be incident to either $x$, $y$, or some vertex in $A\setminus R$. Consequently, in every scenario, there exists a black edge in $C$ that is incident to the relevant vertex (either $x$, $y$, or a vertex in $A\setminus R$) and this black edge belongs to $S$. This implies that $d \ge 2p+1$. This proves Claim~\ref{Claim-p-d-q}.

\begin{figure}[htpb]
    \centering
    \scalebox{0.8}{
    \begin{tikzpicture}[thick]
        \coordinate (A) at (-4,0);
        \coordinate (B) at (-2,0);
         \coordinate (C) at (-1,0);
        \coordinate (D) at (0,0);
        \coordinate (E) at (1,0);
        \coordinate (F) at (2,0);
        \coordinate (G) at (3,0);
        \coordinate (H) at (5,0);

       \coordinate (P1) at (7,0);
        \coordinate (P2) at (9,0);
         \coordinate (P3) at (10,0);
        \coordinate (P4) at (11,0);
        \coordinate (P5) at (12,0);
        \coordinate (P6) at (13,0);
        \coordinate (P7) at (14,0);
        \coordinate (P8) at (16,0);
         \coordinate (P9) at (9.5,-1);
         \coordinate (P10) at (13,-1);
        \path
         (A) edge[bend left=40, line width=1.5pt] (H)
         (A) edge[dotted, line width=1.5pt] (B)
          (B) edge[blue, line width=1.5pt] (C)
          (C) edge[ line width=1.5pt] (D)
            (D) edge[dotted, line width=1.5pt] (E)
            (E) edge[blue, line width=1.5pt] (F)
            (F) edge[blue, line width=1.5pt] (G)
             (G) edge[dotted, line width=1.5pt] (H)
             
            (P1) edge[bend left=40, line width=1.5pt] (P8)
            (P1) edge[dotted, line width=1.5pt] (P2)
           
            (P3) edge[ line width=1.5pt] (P4)
            (P4) edge[dotted, line width=1.5pt] (P5)
            
             (P7) edge[dotted, line width=1.5pt] (P8)
             (P9) edge[ line width=1.5pt] (P3)
             (P10) edge[ line width=1.5pt] (P5)
             (P10) edge[ line width=1.5pt] (P7)
             (P9) edge[ line width=1.5pt] (P2);

        \tikzset{std node fill/.style={circle, fill=black, draw=black, line width=1pt, inner sep=2pt}}
        \foreach \point in {A,B,C,D,E,F,G,H,P1,P2,P3,P4,P5,P6,P7,P8} {
            \node[std node fill] at (\point) {};
        }

         \node[draw=black, circle,fill=white, inner sep=3pt, label=below:{}] (P9) at (9.5, -1) {$H_i$};
         
        \node[draw=black, circle,fill=white, inner sep=3pt, label=below:{}] (P10) at (13, -1) {$H_j$};
        \node at (-1, 0.4) {$w_i$};
       
        \node at (10, 0.4) {$w_i$};
        \node at (13, 0.4) {$w_j$};
        \node at (2, 0.4) {$w_j$};

    \end{tikzpicture}
    }
    \caption{Illustration of paths corresponding to 
the maximal blue subpaths}
    \label{Figure-cycle-blue}
\end{figure}

One readily observes that each $B_i$ in $C'$ corresponds to a path of length at least two with the same endpoints as $B_i$ in $G$, whose interior is in $V(G)\setminus V(P)$, for $i\in [q]$. See the illustration in Figure~\ref{Figure-cycle-blue}.
By Observation~\ref{Observation-2-connected}, a red edge $x_iy_i$ in $C'$ corresponds to an $(x_i,y_i)$-path of length at least three in $G$, whose interior is in $V(G)\setminus V(P)$.  Therefore, $C'$ can be transformed into a cycle $C^*$ in $G$ by 
substituting all $B_i$ for $i\in [q]$ and all red edges within $C'$ with the corresponding paths in $G$ that share the same endpoints. Note that $xy\in E(C^*)$. 
\begin{claim}\label{Claim-c=p=0}
    $r=c=0$ and $q=p$.
\end{claim}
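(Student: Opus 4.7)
The plan is to transform the cycle $C'$ into a closed structure $C^\ast$ in $G$ by lifting every non-black edge of $C'$ back to a corresponding subgraph of $G$. By Observation~\ref{Observation-2-connected}, each red edge $x_jy_j$ of $C'$ lifts to an $(x_j,y_j)$-path in $G$ of length at least $3$, and each maximal blue subpath $B_i$ of $C'$ lifts to a path in $G$ of length at least $2$ joining the endpoints of $B_i$, with interior vertices inside the component producing its contracted interior vertex. Since distinct components $H_\ell$ are vertex-disjoint and each contributes at most one such blue or red structure (the contracted vertex has only two edges in $C'$), these lifts are interior-disjoint, and together with the black edges of $C'$ they assemble into an $(x,y)$-path in $G$ of length $|E(C^\ast)|-1$ (obtained from $C^\ast$ by deleting the edge $xy$, in either of the cases $xy\in E(G)$ or $xy\notin E(G)$).

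I then count the length of $C^\ast$ edge by edge. Each of the $p$ length-$2$ blue subpaths replaces two edges by at least two, gaining at least $0$; each of the $q-p$ length-$1$ blue subpaths replaces one edge by at least two, gaining at least $1$; each of the $c$ red edges gains at least $2$. Thus $|E(C^\ast)|\geq |E(C')|+(q-p)+2c$. Using $|E(C')|=|E(C)|-k+b+c$, $b=q+p$, $k=2r+d$, and the identity $r+d=b+c=q+p+c$ already established, this simplifies to
\[
|E(C^\ast)|\ \geq\ |E(C)|-r+(q-p)+2c.
\]

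Because $P$ is a longest $(x,y)$-path, the $(x,y)$-path extracted from $C^\ast$ has length at most $|E(P)|=|E(C)|-1$, so $|E(C^\ast)|\leq |E(C)|$. This forces $r\geq (q-p)+2c$. On the other hand, Claim~\ref{Claim-p-d-q} gives $d\geq 2p$, whence $r=q+p+c-d\leq (q-p)+c$. Combining, $(q-p)+2c\leq r\leq (q-p)+c$, which immediately yields $c=0$ and $r=q-p$. If $q\geq p+1$, then Claim~\ref{Claim-p-d-q} strengthens to $d\geq 2p+1$, giving $r\leq q-p-1$, contradicting $r=q-p$. Therefore $q=p$ and $r=0$.

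The main technical obstacle is the first paragraph: one must carefully confirm that each component $H_\ell$ gives rise to at most one lifted segment of $C^\ast$, so that the interiors of the replacement paths are pairwise disjoint and $C^\ast$ is a genuine cycle, and that both cases $xy\in E(G)$ and $xy\notin E(G)$ lead to the same comparison $|E(C^\ast)|\leq |E(C)|$. Once this is in place, the rest is bookkeeping driven by the identities $r+d=b+c$ and $b=q+p$ together with the two inequalities of Claim~\ref{Claim-p-d-q}.
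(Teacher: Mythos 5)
Your proposal is correct and follows essentially the same route as the paper: both lift the blue subpaths and red edges of $C'$ to paths in $G$ to form $C^*$, bound $|E(C^*)|$ from below by $|E(C)|-k+2q+3c$ (your $|E(C')|+(q-p)+2c$ is the same quantity), compare against $|E(C)|$ via the longest-path hypothesis, and close the argument with the identities $b=q+p$, $r+d=b+c$ and both parts of Claim~\ref{Claim-p-d-q}. The only difference is a cosmetic rearrangement of the algebra (two separate bounds on $r$ versus the paper's single chain of inequalities forced to be equalities), so no further comment is needed.
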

Since $|E(C')|=|E(C)|-k+b+c$, by the construction of $C^*$, we have that
\begin{flalign*}
 |E(C^*)|\geq |E(C)|-k+2q+3c.   
\end{flalign*}
Recall that $k=2r+d$, $b=q+p$, and $b+c=r+d$.
Thus, since $C$ is a longest cycle in $G$, we have that
\begin{align*}
    |E(C)|\geq |E(C^*)|&\geq |E(C)|-2r-d+2q+3c\\
    &=|E(C)|-2r-d+2q+3c+2(r+d-b-c)\\
    &=|E(C)|+c+d+2q-2b\\
    &=|E(C)|+c+d-2p\\
    &\ge |E(C)|,
\end{align*}
where the last inequality follows from Claim~\ref{Claim-p-d-q}. Moreover, the above inequalities must be equalities. Consequently, we obtain $c+d-2p=0$, which implies that $c=0$ and $q=p$ by applying Claim~\ref{Claim-p-d-q} again. It also follows that $2q+3c-2r-d=0$, which implies that $r=0$. This proves Claim~\ref{Claim-c=p=0}.

By Claim~\ref{Claim-c=p=0}, we conclude that $(i)$ the cycle $C'$ has the same vertex set as $C$, $(ii)$ the cycle $C'$ consists only of black and blue edges, and $(iii)$ each maximal blue subpath has a length of exactly two.
From the proof of Claim~\ref{Claim-c=p=0}, it is clear that every maximal blue subpath $B_i$ in $C'$ corresponds one-to-one to a path $Q_i$ of length precisely two. Otherwise, we would obtain $|E(C^*)|>|E(C)|$, a contradiction. Additionally, we denote the unique internal vertex of $Q_i$ as $w_i^*$. See the illustration in Figure~\ref{Figure-cycle}.
\begin{figure}[htpb]
    \centering
    \scalebox{0.8}{
    \begin{tikzpicture}[thick]
        \coordinate (A) at (-4,0);
        \coordinate (B) at (-1,0);
        \coordinate (C) at (0,0);
        \coordinate (D) at (1,0);
        \coordinate (E) at (4,0);

        \coordinate (P1) at (7,0);
        \coordinate (P2) at (10,0);
        \coordinate (P3) at (11,0);
        \coordinate (P4) at (12,0);
        \coordinate (P5) at (15,0);
        \coordinate (P6) at (11,-1);

        \path
            (C) edge[blue, line width=1.5pt] (D)
            (D) edge[blue, line width=1.5pt] (B)
            (A) edge[dotted, line width=1.5pt] (B)
            (D) edge[dotted, line width=1.5pt] (E)
            (A) edge[bend left=40, line width=1.5pt] (E)
            (P1) edge[dotted, line width=1.5pt] (P2)
            (P4) edge[dotted, line width=1.5pt] (P5)
            (P1) edge[bend left=40, line width=1.5pt] (P5)
            (P6) edge[ line width=1.5pt] (P2)
            (P6) edge[ line width=1.5pt] (P4);

        \draw[opacity=0.4,red, line width=4pt] (P1) to [bend left=40] (P5);
        \draw[opacity=0.4,red, line width=4pt] (P6) to (P4);
        \draw[opacity=0.4,red, line width=4pt] (P6) to (P2);
        \draw[opacity=0.4,red, line width=4pt] (P1) to (P2);
        \draw[opacity=0.4,red, line width=4pt] (P5) to (P4);

        \tikzset{std node fill/.style={circle, fill=black, draw=black, line width=1pt, inner sep=2pt}}
        \foreach \point in {A,B,C,D,E,P1,P2,P3,P4,P5,P6} {
            \node[std node fill] at (\point) {};
        }

        \node at (-4, 0.4) {$x$};
        \node at (4, 0.4) {$y$};
        \node at (0, 0.4) {$w_i$};
        \node at (7, 0.4) {$x$};
        \node at (15, 0.4) {$y$};
        \node at (11, 0.4) {$w_i$};
        \node at (11, -1.4) {$w_i^*$};
    \end{tikzpicture}
    }
    \caption{Illustrations of $C'$ and $C^*$}
    \label{Figure-cycle}
\end{figure}

Now, let $G_3$ denote the graph consisting of $C^*$, $C$, and all edges of the form $w_iw_i^*$ 
for each $ i \in [p] $.  See the illustration in Figure~\ref{Figure-G_3}.
\begin{figure}[htbp]
    \centering
    \scalebox{0.8}{
    \begin{tikzpicture}[thick, every node/.style={circle, draw=black, fill=black, inner sep=2pt}]
        \node[label=above:{$x$}] (0) at (-7.5, 5) {};
        \node[label=above:{}] (1) at (-5.5, 5) {};
        \node[label=above:{}] (2) at (-3.5, 5) {};
        \node[label=below right:{$w_2$}] (3) at (0, 5) {};
        \node[label=above:{}] (4) at (1, 5) {};
        \node[label=below right:{$w_3$}] (5) at (4.5, 5) {};
        \node[label=above:{}] (6) at (5.5, 5) {};
        \node[label=above:{$y$}] (7) at (7.5, 5) {};
        \node[label= below right:{$w_1$}] (11) at (-4.5, 5) {};
        \node[label=above:{}] (13) at (-1, 5) {};
        \node[label=above:{}] (14) at (3.5, 5) {};

       \node[label=below:{$w_1^*$}] (15) at (-4.5, 3.5) {};
         \node[label=below:{$w_2^*$}] (16) at (0, 3.5) {};
          \node[label=below:{$w_3^*$}] (17) at (4.5, 3.5) {};
        \path 
(0) edge[bend left=30, line width=1.5pt] (7)
 (1) edge[ bend left=25, line width=1.5pt] (3)
(6) edge[ bend right=25, line width=1.5pt] (3)
(5) edge[ bend right=25, line width=1.5pt] (2)
(5) edge[ bend right=25, line width=1.5pt] (4)
(11) edge[ bend left=30, line width=1.5pt] (13)
(11) edge[ bend left=28, line width=1.5pt] (14)

 ;
        \draw[dotted, line width=1.5pt] 
            (0) -- (1)

            (6) -- (7)

            (2) -- (13)
            
            (4) -- (14)
           
           ;

         \draw[ line width=1.5pt] 
            (15)--(1)
            (15)--(2)
            (15)--(11)
            
            (16)--(13)
            (16)--(3)
            (16)--(4)
            
            (17)--(14)
            (17)--(5)
            (17)--(6)

            ;
    \end{tikzpicture}
    }
    \caption{Illustration of $G_3$}
    \label{Figure-G_3}
\end{figure}

Note that $\{w_iw_i^*:~i\in[q]\}$ forms a matching $M'$.
Let $ G_4 $ be the graph obtained from $ G_3 $ by replacing each maximal path of $C^*$, whose internal vertices have degree $2$ in $ G_3 $, with a single edge between its endpoints; these new edges form a matching $ M $ in $ G_4 $. 
We remark that each maximal path of $C^*$, whose internal vertices have degree $2$ in $G_3$, is also contained in $C$. In addition, $xy$ lies in one of the maximal paths.
The cycle obtained from $ C^* $ after this operation is denoted by $ C_1 $.

We now apply Theorem~\ref{Lemma-match} to $G_4$. By Theorem~\ref{Lemma-match}, the graph $ G_4 $ contains a cycle $ C_1' $ that includes all edges in $ M $ and is longer than $ C_1 $. Note that each edge $w_iw_i^*$ in $G_3$ corresponds to a $(w_i,w_i^*)$-path in $G$ with interior in $V(G)\setminus V(P)$. We now restore the edges of $M$ on $ C_1 $ to their original maximal paths, which were previously compressed from paths where all internal vertices had degree $2$ in $ G_3 $, and replace each edge $w_iw_i^*$ of $M'$ on $C_1$ with its corresponding $(w_i,w_i^*)$-path in $G$. The cycle obtained after these operations is denoted by $ C_1'$. Note that $ C_1' $ contains the edge $ xy $, and the length of $ C_1' $ is greater than the length of $C$. However, it follows that $ C_1' - xy $ is an $(x, y)$-path with length greater than the length of $ P $,  a contradiction. 
This completes the proof of Theorem~\ref{Theorem-cubic-2-connected}.
\end{proof}

\section{Proof of Theorem~\ref{Theorem-cubic-3-connected}}\label{Proof-3-connected}
We prove Theorem~\ref{Theorem-cubic-3-connected} by contradiction. Let $G$ be a counterexample to Theorem~\ref{Theorem-cubic-3-connected}. Then, there exists an edge $xy$ and a longest $(x, y)$-path $P$ in $G$ such that $P$ contains at most one internal $P$-bound vertex. Let $C = G[E(P) \cup \{xy\}]$.  By Theorem~\ref{Theorem-cubic-2-connected}, $P$ contains an internal $P$-bound vertex and so $C$ has a chord.
Since $P$ contains at most one internal $P$-bound vertex, $C$ has exactly one chord, say $e$, and one of the endpoints of $e$ must belong to $\{x, y\}$.
Combining the fact that $G$ is a $3$-connected graph, it is easy to see that $C$ has length at least $5$.
 For convenience, we may assume that $e = xw$. Let $H_1, \ldots, H_t$ be the components of $G - C$.
\begin{claim}\label{Claim-3-connected-component}
     $t\geq 2$.
 \end{claim}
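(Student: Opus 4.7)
The plan is to argue by contradiction, assuming $t \le 1$ and producing an $(x,y)$-path longer than $P$. The case $t = 0$ is immediate: then $V(G) = V(C)$, every vertex of $C$ has degree $2$ on $C$, and the cubic condition forces every vertex to be incident to a chord of $C$; this would give $|V(C)|/2 \ge 3$ chords, contradicting that $xw$ is the unique chord. So the interesting case is $t = 1$, where I would set $H = H_1$ (connected by definition of a component). Two structural facts drive the argument: since $xw$ is the only chord of $C$, any vertex $z \in V(C) \setminus \{x, w\}$ whose three $G$-neighbors all lie on $V(C)$ would produce a second chord; applied to each internal vertex of $P$ other than $w$ and to $y$, this shows that each such vertex has exactly one neighbor in $H$, whereas $x$ and $w$ themselves have no neighbor in $H$ because their three neighbors already lie on $C$. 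Let $y'$ denote the unique neighbor of $y$ in $H$, and write $P = v_0 v_1 \cdots v_n$ with $v_0 = x$, $v_n = y$, $u = v_1$, and $v = v_{n-1}$.

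The argument then splits on whether $v = w$. If $v \ne w$, then $v$ has a neighbor $v' \in H$, and because $H$ is connected we may choose a $(v', y')$-path $R$ in $H$ (degenerate if $v' = y'$). Concatenating $P[x, v]$, the edge $vv'$, the path $R$, and the edge $y' y$ yields a simple $(x,y)$-path of length $|E(P)| + |E(R)| + 1 \ge |E(P)| + 1$, contradicting the maximality of $P$.

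The main obstacle is the subcase $v = w$: now $v$ is $P$-bound, so the local detour near $y$ used above is blocked, and one must find a different way to splice $H$ into $P$. The idea is to exploit the chord $xw$ and traverse $P$ in reverse. Since $|V(C)| \ge 5$ forces $u \ne w$, $u$ has a neighbor $u' \in H$, and $H$ contains a $(u', y')$-path $R$. The $(x,y)$-walk
\[
x,\ w,\ v_{n-2},\ v_{n-3},\ \ldots,\ v_1,\ u',\ R,\ y',\ y
\]
starts with the chord $xw$, retraces $P$ from $w = v_{n-1}$ backward down to $u = v_1$, detours through $H$ along $R$, and exits to $y$. Its vertices are pairwise distinct because $V(H)$ is disjoint from $V(C)$, and its length equals $1 + (n - 2) + 1 + |E(R)| + 1 = n + 1 + |E(R)| > |E(P)|$, contradicting the maximality of $P$ once more. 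Ruling out both subcases forces $t \ge 2$, establishing the claim; the subtle point throughout is that $t = 1$ already trivially supplies plenty of edges between $C$ and the single outside component, but the pair $\{x, w\}$ is invisible to $H$, and it is precisely the interaction with the chord $xw$ that lets us convert this into a contradiction in the $v = w$ case.
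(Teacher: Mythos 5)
Your proof is correct. It is the same in spirit as the paper's---both arguments hinge on the observation that $t=1$ forces every vertex of $V(C)\setminus\{x,w\}$ to send its third edge into the single outside component, and then splice a detour through that component to lengthen $P$---but the detour is found differently. The paper uses a pigeonhole step: since $|V(C)|\ge 5$, at least three vertices of $C$ attach to $H_1$, so two of them are consecutive on $P$, and replacing the edge between them by a path through $H_1$ already gives the contradiction. You instead anchor the detour at $y$, which creates the extra subcase $v_{n-1}=w$; your resolution of that subcase---crossing the chord $xw$, running $P$ backwards from $w$ down to $u$, and only then entering $H$---is a genuinely different splicing (it is not obtained by replacing a single edge of $P$) that the paper's pigeonhole sidesteps. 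Both routes are sound; yours is longer but makes explicit the degree bookkeeping (each vertex of $V(C)\setminus\{x,w\}$ has exactly one neighbor in $H$, while $x$ and $w$ have none) that the paper leaves implicit. One cosmetic remark: in the $t=0$ case two chords already suffice for the contradiction, and your count $|V(C)|/2\ge 3$ tacitly uses that $|V(G)|$ is even in a cubic graph; either way that case is fine.
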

Suppose for the sake of contradiction that $t\leq1$. Clearly, $C$ is not a Hamilton cycle and so $t = 1$. Since $C$ has length at least $5$, $H_1$ has two consecutive neighbors in $P$. This implies that $G$ contains  an $(x,y)$-path of length greater than  the length of $P$, a contradiction. This proves Claim~\ref{Claim-3-connected-component}.

Note that $G$ is $3$-connected. Therefore, for each $i \in [t]$, we can select three neighbors of $H_i$ in $C$, denoted by $u_i$, $v_i$, and $w_i$. Since $G$ is a cubic graph, $\{u_i,v_i,w_i\}\cap \{u_j,v_j,w_j\}=\emptyset$ whenever $i\neq j$. For convenience,  suppose that $N_C(x)=\{a,y,w\}$ and $N_C(w)=\{x,b,c\}$.  Let $G_1$ be a new graph obtained from $G-\{x,w\}$ by adding two edges $ay$ and $bc$, and let $C'$ be a cycle obtained from 
 $C-\{x,w\}$ by adding two edges $ay$ and $bc$. Since $C$ has length at least $5$, we have either $a\neq b$ or $c\neq y$ and so $C'$ is a cycle in $G_1$. See the illustrations of $C$ and $C'$ in Figure~\ref{Figure-cycle-3-connected}.

\begin{figure}[htpb]
    \centering
    \scalebox{0.8}{
    \begin{tikzpicture}[thick]
        \coordinate (A) at (-4,0);
        \coordinate (B) at (-1,0);
        \coordinate (C) at (0,0);
        \coordinate (D) at (1,0);
        \coordinate (E) at (4,0);
        \coordinate (F) at (-3,0);

        \coordinate (P1) at (7,0);
        \coordinate (P2) at (10,0);
        
        \coordinate (P4) at (12,0);
        \coordinate (P5) at (15,0);

        \path
            (C) edge[ line width=1.5pt] (D)
            (D) edge[line width=1.5pt] (B)
            (F) edge[dotted, line width=1.5pt] (B)
            (A) edge[ line width=1.5pt] (F)
            (D) edge[dotted, line width=1.5pt] (E)
            (A) edge[bend left=40, line width=1.5pt] (E)
             (A) edge[bend left=30, line width=1.5pt] (C)
            (P1) edge[dotted, line width=1.5pt] (P2)
            (P4) edge[dotted, line width=1.5pt] (P5)
            (P1) edge[bend left=40, line width=1.5pt] (P5)
            (P2) edge[ line width=1.5pt] (P4)
            ;


        \tikzset{std node fill/.style={circle, fill=black, draw=black, line width=1pt, inner sep=2pt}}
        \foreach \point in {A,B,C,D,E,F,P1,P2,P4,P5} {
            \node[std node fill] at (\point) {};
        }

        \node at (-4, -0.4) {$x$};
        \node at (-3, -0.4) {$a$};
         \node at (-1, -0.4) {$b$};
          \node at (1, -0.4) {$c$};
        \node at (4, -0.4) {$y$};
        \node at (0, -0.4) {$w$};
        \node at (7, -0.4) {$a$};
        \node at (15, -0.4) {$y$};
        \node at (10, -0.4) {$b$};
        \node at (12, -0.4) {$c$};
        
    \end{tikzpicture}
    }
    \caption{Illustrations of $C$ and $C'$}
    \label{Figure-cycle-3-connected}
\end{figure}

We finish the proof of Theorem~\ref{Theorem-cubic-3-connected} by considering the following two cases.

\begin{case}\label{Case1}
$\{a,y\}\cap \{b,c\}=\emptyset.$
\end{case}
We now define a new graph $G_2$, whose construction is based on  the subpath $C'$ and $\{u_i,v_i,w_i\}$ for $i \in [t]$, as follows:
\begin{flalign*}
G_2=C'\cup \bigg(\bigcup\limits_{i=1}^t\{u_{i}v_{i}, u_{i}w_{i}, v_{i}w_{i}\}\bigg).
\end{flalign*}
Note that if there exists $i \in [t]$ such that $\{a,y\}\subseteq \{u_i,v_i,w_i\}$ or $\{b,c\}\subseteq \{u_i,v_i,w_i\}$, then $G_2$ may have parallel edges.
One readily observes that the parallel edge in $G_2$ is either parallel to $ay$ or to $bc$. Furthermore, let $G_2^*$ be a graph obtained from $G_2$ by deleting all possible parallel edges. Then each non-trivial component of $G_2^*-E(C')$ is either a triangle or a path of order $3$.  By Lemma~\ref{Lemma-color-triangle-path}, we have that $\chi(G_2^*)\leq 3$. Since $G_2^*$ contains at least one triangle, we have that $\chi(G_2^*)=3$. 
 Let $A$ be a color
class such that $\{a,y\}\cap A =\emptyset$. Note that $|A\cap \{u_i,v_i,w_i\}|=1$ for each $i\in [t]$. Without loss of generality, we may  assume that $A$ contains all vertices of the form $w_{i}$. By Claim~\ref{Claim-3-connected-component},  we have that $|A|\geq 2$.
Furthermore, we construct a new graph $G_3$ from $G_1$ by contracting each component $H_i$ of $G_1-V(C')$ into $w_{i}$.

Now, $G_3$ is a hamiltonian graph with a Hamilton cycle $C'$ and satisfies the following conditions:
\begin{itemize}
    \item[$(1)$] $C'-A$ has $|A|$ components, and 
    \item[$(2)$] the endpoint of each component in $C' - A$ that does not contain $b$ and $c$ is joined to a vertex in $A$ by some chord of $C'$.
\end{itemize}

By Lemma~\ref{Lemma-another-Hamilton-cycle}, $G_3$ has a Hamilton cycle $C_1$ distinct from $C'$ such that 
\begin{itemize}
    \item[(3)] $\{ay,bc\}\subseteq E(C_1)$, and
    \item[(4)]  there is a vertex $v$ in $A$ such that one of the two edges of $C_1$ incident with $v$ is in $C'$ and the other is not in $C'$.
\end{itemize}
We emphasize that $ay \in E(C_1)$ follows from $\{a, y\} \cap A = \emptyset$. Irrespective of whether $\{b, c\} \cap A = \emptyset$, Lemma~\ref{Lemma-another-Hamilton-cycle} guarantees the existence of a Hamilton cycle $C_1$ containing $bc$.

Now we color the edges of the graph $G_3$ to facilitate our subsequent proof. In particular, we color the edges of the cycle 
$C'$ black and the remaining edges blue.

Let $B_1,B_2,\dots, B_q$ denote the maximal blue subpaths within $C_1$. Since $A$ is an independent set, it follows from the construction of $G_3$ that the length of each $B_i$ is at most two. Without loss of generality, we assume that the subpaths $B_1,\dots, B_p$ have a length of two, while the subpaths $B_{p+1},\dots,B_q$ have a length of one, where $p\le q$.

One readily observes that each $B_i$ in $C_1$ corresponds to a path of length at least two with the same endpoints as $B_i$ in $G_1$, whose interior is in $V(G_1)\setminus V(C_1)$, for $i\in [q]$. See the illustration in Figure~\ref{Figure-cycle-blue}.
Therefore, $C_1$ can be transformed to a cycle $C_1'$ in $G_1$ by substituting all $B_i$ for $i\in [q]$ with the corresponding paths in $G_1$ that share the same endpoints. 
Since $ay$ and $bc$ are colored black, the cycle $C_1'$ contains $ay$ and $bc$. Furthermore, the cycle $C_1'$ can be transformed into a cycle $C_1^*$ in $G$ by deleting edges $ay$ and $bc$, and adding edges $ax,xy,bw$ and $wc$. Thus, 
\begin{align*}
|E(C_1^*)|\ge |E(C_1')|+2
&\ge |E(C_1)|-\sum_{i=1}^p|E(B_i)|-\sum_{i=p+1}^q|E(B_i)|+2q+2\\
&=|E(C)|-\sum_{i=1}^p|E(B_i)|-\sum_{i=p+1}^q|E(B_i)|+2q\\
&=|E(C)|-2p-(q-p)+2q\\
&=|E(C)|+q-p.
\end{align*}
The condition $(4)$ implies that $q>p$ and so $|E(C_1^*)|>|E(C)|.$
However, it follows that $C_1^*-xy$ is an $(x,y)$-path  with length greater than the length of $P$, a contradiction.

\begin{case}
$\{a,y\}\cap \{b,c\}\neq \emptyset.$
\end{case}
Without loss of generality, we may assume that $y=c$. Suppose that there exists a component $H$ of $G_1-V(C')$ such that $\{a,y\}\subseteq N_{C'}(H)$. Let $P_{ay}$ be a $(a,y)$-path with interior in $H$. However, it follows that 
$xwbP[b,a]aP_{ay}y$
is an $(x,y)$-path with length greater than the path of $P$, a contradiction. Therefore,  $|\{a,y\}\cap \{u_i,v_i,w_i\}|\le 1$ for each $i\in [t]$. 

We now define a new graph $G_4$, whose construction is based on   $C'$ and $\{u_i,v_i,w_i\}$ for $i \in [t]$, as follows:
\begin{flalign*}
 G_4=C'\cup \bigg(\bigcup\limits_{i=1}^t\{u_{i}v_{i}, u_{i}w_{i}, v_{i}w_{i}\}\bigg).
\end{flalign*}
Note that if there exists $i \in [t]$ such that $\{b,c\}\subseteq \{u_i,v_i,w_i\}$, then $G_4$ may have parallel edges.
One readily observes that the only possible parallel edge is 
$bc$. Furthermore, let $G_4^*$ be a graph obtained from $G_4$ by deleting the possible parallel edge. Then each non-trivial component of $G_4^*-E(C')$ is either a triangle or a path of order $3$. By Lemma~\ref{Lemma-color-triangle-path}, we have that $\chi(G_4^*)\leq 3$. Since $G_4^*$ contains at least one triangle, we have that $\chi(G_4^*)=3$. 
 Let $A$ be a color
class such that $\{a,y\}\cap A =\emptyset$. Note that $|A\cap \{u_i,v_i,w_i\}|=1$ for each $i\in [t]$. Without loss of generality, we may  assume that $A$ contains all vertices of the form $w_{i}$.
Furthermore, we construct a new graph $G_5$ by contracting each component $H_i$ of $G_1-V(C')$ into $w_{i}$.

Now, $G_5$ is a hamiltonian graph with a Hamilton cycle $C'$ and satisfies the following conditions:
\begin{itemize}
    \item[$(1^*)$] $C'-A$ has $|A|$ components, and 
    \item[$(2^*)$] the endpoint of each component in $C' - A$ that does not contain $b$  is joined to a vertex in $A$ by some chord of $C'$.
\end{itemize}
By  Lemma~\ref{Lemma-another-Hamilton-cycle}, $G_5$ has a Hamilton cycle $C_2$ distinct from $C'$ such that 
\begin{itemize}
    \item[$(3^*)$] $\{ay,yb\}\subseteq E(C_2)$, and
    \item[$(4^*)$]  there is a vertex $v$ in $A$ such that one of the two edges of $C_2$ incident with $v$ is in $C'$ and other is not in $C'$.
\end{itemize}

Similar to Case~\ref{Case1}, $C_2$ can be transformed into a cycle $C_2'$ in $G_1$ by adding paths in certain components of $G' - V(C')$; each such path corresponds to one or two contracted edges incident with a vertex of $A$. Furthermore, the cycle $C_2'$ can be transformed into a cycle $C_2^*$ in $G$ by deleting edges $ay$ and $yb$, and adding edges $ax,xy,yw$ and $wb$. 
 The condition $(4^*)$ implies that the length of $C_2^*$ is greater than the length of $C$. However, it follows that $C_2^*-xy$ is an $(x,y)$-path  with length greater than the length of $P$, a contradiction.
 This completes the proof of Theorem~\ref{Theorem-cubic-3-connected}.\hfill $\Box$

\section*{Acknowledgement}  The authors are grateful to Professor Xingzhi Zhan for his constant support and guidance. This research  was supported by the NSFC grant 12271170 and Science and Technology Commission of Shanghai Municipality (STCSM) grant 22DZ2229014.

\section*{Declaration}

	\noindent$\textbf{Conflict~of~interest}$
	The authors declare that they have no known competing financial interests or personal relationships that could have appeared to influence the work reported in this paper.
	
	\noindent$\textbf{Data~availability}$
	Data sharing not applicable to this paper as no datasets were generated or analysed during the current study.

\end{document}